\documentclass[a4paper,11pt, twoside]{amsart}

\usepackage{amsmath,amsfonts,amsthm,amsopn,color,amssymb,enumitem}
\usepackage{palatino}

\usepackage[colorlinks=true,urlcolor=blue, citecolor=red,linkcolor=blue,linktocpage,pdfpagelabels, bookmarksnumbered,bookmarksopen]{hyperref}
\usepackage[hyperpageref]{backref}

\usepackage[left=2.61cm,right=2.61cm,top=2.72cm,bottom=2.72cm]{geometry}

\newcommand{\R}{\mathbb{R}}

\newcommand{\RN}{{\mathbb{R}^N}}

\renewcommand{\le}{\leslant}
\renewcommand{\ge}{\geslant}
\renewcommand{\a }{\alpha }

\renewcommand{\d }{\delta }

\renewcommand{\l }{\lambda}

\newcommand{\s }{\sigma }
\renewcommand{\t}{\theta}

\renewcommand{\L}{\Lambda}

\newcommand{\I}{\mathcal{I}}

\newcommand{\M}{\mathcal{M}_{\l,\L}}

\newcommand{\N}{\mathbb{N}}

\def\bbm[#1]{\mbox{\boldmath $#1$}}
\newcommand{\beq }{\begin{equation}}
\newcommand{\eeq }{\end{equation}}

\renewcommand{\le}{\leqslant}
\renewcommand{\ge}{\geqslant}
\newcommand{\dis}{\displaystyle}

\providecommand{\pgfsyspdfmark}[3]{}

\renewcommand{\l}{\lambda}

\makeatletter
\@addtoreset{equation}{section}%
\renewcommand{\theequation}{\thesection.\@arabic\c@equation}
\makeatother

\makeatletter
\providecommand\@dotsep{5}
\def\listtodoname{List of Todos}
\def\listoftodos{\@starttoc{tdo}\listtodoname}
\makeatother

\newtheorem{theorem}{Theorem}[section]
\newtheorem{lemma}[theorem]{Lemma}
\newtheorem{remark}[theorem]{Remark}
\newtheorem{proposition}[theorem]{Proposition}
\newtheorem{definition}[theorem]{Definition}

\title
[Oscillating solutions]
{Oscillating solutions for nonlinear equations involving \\the Pucci's extremal operators}

\author[P. d'Avenia]{Pietro d'Avenia}

\address[P. d'Avenia]{\newline\indent
Dipartimento di Meccanica, Matematica e Management
\newline\indent 
Politecnico di Bari
\newline\indent
Via Orabona 4,  70125  Bari, Italy}
\email{\href{mailto:pietro.davenia@poliba.it}{pietro.davenia@poliba.it}}

\author[A. Pomponio]{Alessio Pomponio}

\address[A. Pomponio]{\newline\indent
Dipartimento di Meccanica, Matematica e Management
\newline\indent 
Politecnico di Bari
\newline\indent
Via Orabona 4,  70125  Bari, Italy}
\email{\href{mailto:alessio.pomponio@poliba.it}{alessio.pomponio@poliba.it}}

\thanks{The authors are supported by PRIN 2017JPCAPN {\em Qualitative and quantitative aspects of nonlinear PDEs}}

\subjclass[2010]{35B05, 35J60}
%\date{}
\keywords{Pucci's extremal operators, fully nonlinear operator equations, oscillating solutions.}

\begin{document}

\begin{abstract}
This paper deals with the following nonlinear equations
\[
\mathcal{M}_{\lambda,\Lambda}^\pm(D^2 u)+g(u)=0 \qquad \hbox{ in }\mathbb{R}^N,
\]
where $\mathcal{M}_{\lambda,\Lambda}^\pm$ are the Pucci's extremal operators,
for $N \ge 1$ and under the assumption $g'(0)>0$. We show the existence of oscillating solutions, namely with an unbounded sequence of zeros. Moreover these solutions are periodic, if $N=1$, while they are radial symmetric and decay to zero at infinity with their derivatives, if $N\ge 2$. 
\end{abstract}

\maketitle

\section{Introduction}

Let $0<\l\le \L$ be two given positive real numbers. If $u\in C^2(\RN,\R)$, the Pucci's extremal operators are given by
\begin{equation*}
\M^+(D^2 u)=\l\sum_{e_i<0} e_i+\L\sum_{e_i\ge 0} e_i
\quad\hbox{ and }\quad
\M^-(D^2 u)=\L\sum_{e_i<0} e_i+\l\sum_{e_i\ge 0} e_i
\end{equation*}
where $e_i = e_i(D^2u)$, $i = 1,\ldots,N$, are the eigenvalues of the Hessian matrix $D^2u$. More details and equivalent definitions can be found in the monograph of Caffarelli and Cabr\`e \cite{CC}. Clearly, in the special case $\l  = \L$ the two operators become the same and in particular we have that
\[
\M^+(D^2 u)=\M^-(D^2 u)=\l\Delta u.
\]
The Pucci's extremal operators have natural applications for instance in financial mathematics \cite{ALP} and stochastic control with variable diffusion coefficients \cite{BL1}. From a mathematical point of view, they are very important and well studied because they are prototypes of fully nonlinear uniformly elliptic operators. Indeed even though they retain positive homogeneity and some properties associated to the maximum principle, they are no longer in divergence form, thus deviating in a fundamental manner away from the Laplacian. Moreover, equations with these kinds of operators are not variational.

Many authors, in the last decades, focused their attention on the study of the equation
\begin{equation*}%\label{M}
\M^\pm(D^2 u)+g(u)=0,
\end{equation*}
looking for radial solutions in the whole $\RN$ or in ball with Dirichlet boundary condition. In particular, existence, non-existence and uniqueness of positive radial solutions, when $g(u)=u^p$ or $g(u)=-u+u^p$, for suitable $p>1$, have been obtained in \cite{CL,EFQ,FI,FQ2003,FQ2004,FQT,GLP}.
Observe that, in these cases, $g'(0)\le 0$. 

Up to our knowledge, very few is known, instead, whenever $g'(0)>0$.
% The case $g'(0)>0$, conversely, is completely different, indeed, as well explained in \cite{BL}, a direct consequence of this hypothesis is that radially symmetric $\H$ solutions of \eqref{delta} can not exist, and usual variational methods fail.  
This case, at least when $\l=\L$,  is very important since it is related to the study of the propagation of lights beams in a photorefractive crystals when a saturation effect is taken into account (see \cite{MMP} for a more precise description about these phenomena). Under this condition we can find, for example, the well known nonlinear Helmholtz equation \cite{E,EW1,EW2,EW3}. 

Aim of this paper is, therefore, the study of the case $g'(0)>0$. Under suitable assumptions, we will prove the existence of oscillating solutions, which are periodic for $N=1$ and go to zero at infinity with their derivatives for $N>1$, for problem
\begin{equation}\label{M}\tag{$\mathcal{P}^\pm$}
\M^\pm(D^2 u)+g(u)=0, \qquad\hbox{in }\RN. 
\end{equation}
To be more precise, throughout
this paper, in the one-dimensional case, we will assume on the nonlinearity the following hypotheses:
\begin{enumerate}[label=(g\arabic{*}), ref=g\arabic{*}]
\item \label{g1} $g:\R \to \R$ is locally Lipschitz;
\item \label{g2} $g$ is odd;
\item \label{g3} there exists $\a \in  (0, +\infty]$ such that $g$ is positive on $(0, \a)$ and negative on $(\a, +\infty)$;
\end{enumerate}
while, if $N\ge 2$, we will require in addition that
\begin{enumerate}[label=(g\arabic{*}), ref=g\arabic{*}]
\setcounter{enumi}{3}
\item \label{g4} $g$ is differentiable at $0$ and $g'(0) > 0$.
\end{enumerate}
Moreover, in the following we will denote by  $\displaystyle G(t)=\int_0^t g(s)\ ds$.

In order to state our results, we first need the following
\begin{definition}
A  solution $u$ of \eqref{M} is called {\em oscillating} if it has an unbounded sequence of zeros. It is called {\em localized} when it converges to zero at infinity together with its partial derivatives up to order~2.
\end{definition}

Our main result is
\begin{theorem}\label{main}
Assume \eqref{g1}-\eqref{g3} and, if $N\ge 2$, also \eqref{g4}. Then
there exist infinitely many oscillating solutions of class $C^2(\RN)$ for \eqref{M}. Moreover these solutions are periodic if $N=1$, while they are localized if $N\ge 2$.
\end{theorem}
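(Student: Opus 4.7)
The plan is to treat the one-dimensional and higher-dimensional cases separately, in both cases reducing \ef{M} to an ODE for a radial profile and constructing a one-parameter family of solutions by phase-plane or shooting analysis.

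\emph{Case $N=1$.} Here the Hessian reduces to the scalar $u''$, and hypotheses \ef{g2}--\ef{g3} force any oscillating solution of $\M^+(u'')+g(u)=0$ to satisfy $u''<0$ on $\{u>0\}$ and $u''>0$ on $\{u<0\}$. The equation is thus equivalent to the piecewise autonomous ODE
\[
u''(x)=-\frac{g(u(x))}{\gamma(u(x))},\qquad \gamma(s)=\begin{cases}\lambda & \text{if } s>0,\\ \Lambda & \text{if } s<0\end{cases}
\]
(the $\M^-$ case is symmetric, with $\lambda$ and $\Lambda$ swapped). Multiplying by $u'$ on each half-line yields the first integrals $\tfrac{\lambda}{2}(u')^2+G(u)$ on $\{u>0\}$ and $\tfrac{\Lambda}{2}(u')^2+G(u)$ on $\{u<0\}$. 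I would fix $T\in(0,\alpha)$ small enough that $\Lambda G(T)/\lambda<G(\alpha)$ and follow the Cauchy problem $u(0)=T$, $u'(0)=0$: $u$ decreases to its first zero with $u'=-\sqrt{2G(T)/\lambda}$, and then on $\{u<0\}$ the second conservation law forces a first minimum $-\widetilde T$ satisfying $G(\widetilde T)=\Lambda G(T)/\lambda$; reflection symmetry of each autonomous branch closes the orbit back to the maximum $T$. This yields a periodic solution with an unbounded sequence of zeros, and continuity of $u''$ at the crossings (where $g(u)=0$) and at the turning points is immediate from the explicit formula, so $u\in C^2(\R)$. Letting $T$ vary in an interval produces infinitely many distinct periodic solutions.

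\emph{Case $N\ge 2$.} I would look for radial solutions $u=u(r)$, $r=|x|$. Since the eigenvalues of $D^2u$ are $u''(r)$ (simple) and $u'(r)/r$ (with multiplicity $N-1$), \ef{M} becomes the piecewise autonomous ODE
\[
a\,u''(r)+(N-1)\,b\,\frac{u'(r)}{r}+g(u(r))=0,
\]
where the selectors $a,b\in\{\lambda,\Lambda\}$ are determined by the signs of $u''(r)$ and $u'(r)/r$. I would then carry out a shooting argument with Cauchy data $u(0)=s$, $u'(0)=0$ for small $s$: (i) local $C^2$ existence, via a fixed-point perturbation of the linearized equation $\M^\pm(D^2 u)+g'(0)u=0$, which under \ef{g4} has explicit oscillating radial solutions (of Bessel type, with coefficients piecewise constant in phase space); (ii) global extension, via an a priori bound on a weighted functional $E(r)=c(r)(u'(r))^2/2+G(u(r))$ whose derivative along trajectories is nonpositive because of the damping contribution generated by the $(N-1)b\,u'(r)/r$ term; (iii) oscillation on $[0,\infty)$, deduced from a Sturm-type comparison with the linearization, which under \ef{g4} has infinitely many zeros; (iv) localization, obtained from $E(r)\searrow 0$ (combining monotonicity with the radial damping), which gives $u(r),u'(r)\to 0$ and, by substitution into the equation, $u''(r)\to 0$. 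Varying $s$ in an appropriate interval yields infinitely many distinct localized solutions.

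\emph{Main obstacle.} The principal difficulty is the $C^2$ matching across the loci where one of the eigenvalues of $D^2u$ changes sign: the selectors $a,b$ jump there, and continuity of $u''$ must be recovered from the equation itself. This is trivial in dimension one but delicate for $N\ge 2$. A related technical issue is that Sturm comparison and ODE-monotonicity arguments need to be adapted to these discontinuous, state-dependent coefficients in order to close steps (iii) and (iv).
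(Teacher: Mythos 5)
Your $N=1$ argument is essentially the paper's: the piecewise conservation laws $\tfrac{\lambda}{2}(u')^2+G(u)$ on $\{u>0\}$ and $\tfrac{\Lambda}{2}(u')^2+G(u)$ on $\{u<0\}$, the smallness condition $\Lambda G(T)<\lambda G(\alpha)$ ensuring the trajectory turns before reaching $-\alpha$, and the closed orbit with minimum $-\widetilde T$ determined by $G(\widetilde T)=\Lambda G(T)/\lambda$ all reproduce Case 1 of part (iii) of Theorem \ref{n=1}. That half is sound.

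The gap is in your steps (ii) and (iv) for $N\ge 2$. You posit a weighted functional $E(r)=c(r)(u'(r))^2/2+G(u(r))$ that is nonincreasing along trajectories thanks to the radial damping, and you derive both global boundedness and localization ($E\searrow 0$) from it. For the Pucci operators with $\lambda<\Lambda$ no such globally monotone energy exists: whatever constant or state-dependent selector $c$ you take, $E$ either jumps at the points where $u$ changes concavity or fails to decrease on the stretches where the coefficient multiplying $u''$ disagrees with $c$. The only available information is the partial monotonicity of Lemma \ref{le:Lemma21+} (from Felmer--Quaas--Tang): $E_\lambda$ and $E_\Lambda$ both decrease where $u'>0$, while where $u'<0$ the former decreases only where $g(u)>0$ and the latter only where $g(u)<0$. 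This is precisely the obstruction the paper flags in its introduction, and everything you want to extract from monotonicity --- the interlacing estimates on the sequences of maxima and minima, the bound $\|u\|_{L^\infty}<\alpha$, and above all the decay of $u$ --- requires a substitute argument. The paper chains the two partial monotonicities across the consecutive subintervals between critical points and zeros, and proves localization by contradiction: if the minima stayed below $-\bar c$, then on the set $\I=\bigcup_k[r_{1+4k},r_{2+4k}]$, where $E_\Lambda$ genuinely decreases, it would lose at least $c\,\delta/(r_1+kR)$ on the $k$-th block, forcing $\inf_{\I}E_\Lambda=-\infty$, which is impossible. Your step (iii) is also under-specified: because the Pucci coefficients only yield one-sided differential inequalities, a direct Sturm comparison with an explicit solution of the linearization does not apply; the paper instead sets $v=r^{(N-1)/2}u$ (resp.\ $v=r^{(N-1)\Lambda/(2\lambda)}u$) to get $v''+\sigma v\le 0$ with $\sigma$ eventually bounded below by a positive constant, ruling out a solution that stays one-signed. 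By contrast, the $C^2$-matching issue you single out as the main obstacle is comparatively minor; the missing monotone energy is the real one.
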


This theorem will be an immediate consequence of several distinct results (see Theorems \ref{n=1} and \ref{n=1-}, for the one-dimensional case, and Theorems \ref{n>1+} and \ref{n>1-}, for the multidimensional case).

In this paper we will focus our attention when $\lambda < \Lambda$. As already observed, if $\lambda = \Lambda$, we get  the Laplacian and this kind of problem with the same assumptions on $g$ has been studied in \cite{MMP} (see also \cite{GZ}, for a related problem, and \cite{P}, for the prescribed mean curvature equations both in the Euclidean case and in the Lorentz-Minkowski one).  We will see that, comparing our results with those of \cite{MMP,P},  the situation for an equation involving the Pucci's extremal operators is richer than in presence of the classic Laplacian or the mean curvature operator. Moreover, some of our arguments here are quite different from those developed in \cite{GZ,MMP}, both the in one-dimensional case and in the multi-dimensional one. 
\\
For $\l=\L$, namely when we have the Laplacian operator, if $N=1$, up to the trivial cases, the solutions can be either diverging, positively or negatively, or oscillating. In presence of the Pucci's extremal operators, instead, since
\[
\M^+(u'')=
\begin{cases}
\L u'' & \hbox{ if } u'' \ge 0,\\
\l u'' & \hbox{ if } u'' < 0,
\end{cases}
\quad \hbox{ and }\quad
\M^-(u'')=
\begin{cases}
\l u'' & \hbox{ if } u'' \ge 0,\\
\L u'' & \hbox{ if } u'' < 0,
\end{cases}
\]
it is easy to see that when a solution changes sign, it changes also the concavity and so it solves ordinary differential equations with different coefficients: this produces also solutions with horizontal asymptote.
\\
In the multi-dimensional case, in \cite{GZ,MMP} the authors look for radial solutions and, assuming for simplicity that $\l=\L=1$, for any solution $u$, they introduce the {\em energy} function 
\[
E(r)=\frac{1}{2}(u'(r))^2+G(u(r)),
\] 
which is decreasing, whenever $u\neq 0$. This information is a fundamental tool in all their proofs.  
Instead, in the case $\l\neq \L$, if one looks for radial solutions for \eqref{M} for example in  the case $\M^+$ (the other one is similar), then for  a function $u$ of class $C^2$, we have 
\[
\M^+(D^2 u)(r)= \theta u''(r)+\frac{N-1}{r}\Theta u'(r),
\]
where
\[
\begin{array}{lllll}
\theta=\Lambda& \hbox{when }u''\ge 0 & \hbox{and}& \theta=\lambda & \hbox{when }u''<0; 
\\
\Theta=\Lambda& \hbox{when }u'\ge 0 & \hbox{and}& \Theta=\lambda & \hbox{when }u'<0.
\end{array}
\]
Hence, if $u$ is a solution of \eqref{M}, then one may define the  {\em energy} function 
\[
E_\theta(r)=\frac{\theta}{2}(u'(r))^2+G(u(r)).
\] 
However, as already noticed in \cite{FQT},  we cannot deduce that $E_\theta$ is a decreasing function over the whole range where $u$ is defined and different from zero. In fact $E_\theta$ can be discontinuous at points where $u$ changes concavity. 
Thus $E_\theta$ is only a piecewise $C^1$ function and decreases over each subinterval where $u$ has the same concavity. Therefore $E_\theta$ should be replaced by $E_\l$ and $E_\L$ but only partial results on the monotonicity of $E_\l$ and $E_\L$ are known and this requires a more precise and delicate analysis.
% In particular, due to specific nature of the Pucci's extremal operators, translations arguments used in \cite{GZ,MMP,P} do not seem directly and easily applicable and so we have to develop different arguments.

The paper is organized as follows: in Section \ref{se1}, we deal the one-dimensional case, while in Section \ref{se2} we treat the multi-dimensional case.

Finally, $c, c_i$ are fixed independent constants which may vary from line to line.

\section{The case $N=1$}\label{se1}

In this section we treat the one-dimensional case. We start looking for even solutions of \eqref{M} in the case $\M^+$, considering the following Cauchy problem
\begin{equation}
\label{CP1+}
\begin{cases}
\M^+(u'')+g(u)=0,\\
u(0)=\xi,\\
u'(0)=0,
\end{cases}
\end{equation}
where we recall that
\[
\M^+(u'')=
\begin{cases}
\L u'' & \hbox{ if } u'' \ge 0,\\
\l u'' & \hbox{ if } u'' < 0.
\end{cases}
\]

Our main result in this case is the following

\begin{theorem}\label{n=1}
Assume \eqref{g1}-\eqref{g3}. For any $\xi \in \R$ there exists a solution $u_\xi\in C^2([0,R_\xi))$ of the Cauchy problem \eqref{CP1+}, where $R_\xi\in(0,+\infty]$ is such that $[0,R_\xi)$ is the maximal interval where the function $u_\xi$ is defined. Moreover, we have
\begin{enumerate}[label=(\roman{*}), ref=\roman{*}]
	\item \label{i1+} if $|\xi|=\a \in \R$ or $\xi=0$, then $u_\xi\equiv \xi$;
	\item \label{ii1+} if $\a \in \R$ and  $|\xi| > \a $, then $|u_\xi|$ strictly increases to $+\infty$ on $[0, R_\xi )$;
	\item \label{iii1+} if $0 < \xi < \a$, we distinguish three cases:
\begin{enumerate}
		\item if $\Lambda G(\xi) < \lambda G(\alpha)$, then  $R_\xi=+\infty$ and $u_\xi$ is oscillating and periodic with $\|u_\xi\|_{L^\infty(\R_+)}=
-u_\xi(r_2)<\alpha$, where $r_2$ is the first minimizer of $u$;
		\item if $\Lambda G(\xi) = \lambda G(\alpha)$, then $R_\xi=+\infty$ and  $-\alpha <u_\xi<\alpha$ in $\R_+$; moreover $u_\xi$ is decreasing in $\R_+$  and $\lim_{r\to + \infty} u_\xi(r)=-\alpha$;
		\item if $\Lambda G(\xi) > \lambda G(\alpha)$, then $u_\xi$  decreases to $-\infty$ on $[0, R_\xi )$;
	\end{enumerate}
\item \label{iv1+}  if $-\a < \xi < 0$, then $R_\xi=+\infty$ and $u_\xi$ is oscillating and periodic with $\|u_\xi\|_{L^\infty(\R_+)}=-\xi$.
\end{enumerate}
\end{theorem}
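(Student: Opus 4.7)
The plan is to reformulate (\ref{CP1+}) as a classical second-order ODE with locally Lipschitz right-hand side, obtain a unique $C^2$ maximal solution via Cauchy-Lipschitz, and then perform a piecewise energy analysis on the subintervals where $g(u)$ has constant sign. Since $\M^+(u'')+g(u)=0$ forces the sign of $u''$ to be opposite to that of $g(u)$, the problem is equivalent to $u''=F(u)$ with $F(u)=-g(u)/\lambda$ on $\{g\ge 0\}$ and $F(u)=-g(u)/\Lambda$ on $\{g<0\}$. By (\ref{g1}) and the fact that both branches vanish on $\{g=0\}$, $F$ is locally Lipschitz, yielding the unique $C^2$ solution on a maximal interval $[0,R_\xi)$.

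The main tool is a piecewise conserved energy. Multiplying the ODE by $u'$ shows that $E_\lambda(r)=\tfrac{\lambda}{2}(u'(r))^2+G(u(r))$ is constant wherever $g(u)>0$, while $E_\Lambda(r)=\tfrac{\Lambda}{2}(u'(r))^2+G(u(r))$ is constant wherever $g(u)<0$. Because $u\in C^2$, the quantity $(u')^2$ is continuous at every zero of $u$, and there $G(u)=0$, so the two energies are linked by the transmission rule $\Lambda E_\lambda^{\textup{old}}=\lambda E_\Lambda^{\textup{new}}$ going from a positive to a negative leg, and by the reverse when the orbit flips back. The factor $\Lambda/\lambda$, absent when $\lambda=\Lambda$, is responsible for the richer classification appearing in (\ref{iii1+}).

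With these ingredients each case reduces to following a phase-plane trajectory. Item (\ref{i1+}) is immediate since $g(\xi)=0$ makes $u\equiv\xi$ a solution and uniqueness does the rest. For (\ref{ii1+}) with $\xi>\alpha$, $u''(0)>0$ together with $E_\Lambda\equiv G(\xi)$ and the strict monotonicity of $G$ on $(\alpha,+\infty)$ prevent $u$ from returning to $\xi$, so $u$ strictly increases to $+\infty$ on $[0,R_\xi)$; the case $\xi<-\alpha$ is analogous. For (\ref{iii1+}) with $0<\xi<\alpha$, $u$ descends with $E_\lambda=G(\xi)$ and reaches $0$ at a finite $r_1$ (the integral $\int_0^\xi du/\sqrt{G(\xi)-G(u)}$ converges, thanks to an integrable square-root singularity at $u=\xi$) with $(u'(r_1))^2=2G(\xi)/\lambda$; on the negative leg $E_\Lambda=\Lambda G(\xi)/\lambda$ is conserved, so $u'$ vanishes when $G(u)=\Lambda G(\xi)/\lambda$. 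This equation has a root in $(-\alpha,0)$ iff $\Lambda G(\xi)<\lambda G(\alpha)$, giving the turning value $u_\ast$ of subcase (a); in subcase (b) the only root is $u=-\alpha$ and a standard estimate near the equilibrium, using $g(s)=O(|s+\alpha|)$, shows that the time to reach $-\alpha$ is infinite; in subcase (c) no such root exists, so $u$ crosses $-\alpha$ in finite time with $u'<0$, the equation switches to $\lambda u''+g(u)=0$ in the region $u<-\alpha$ where $g>0$ forces $u''<0$, and $u$ decreases strictly to $-\infty$. In subcase (a), autonomy and reversibility at the turning point $r_2$ make the orbit symmetric about $r_2$, and applying the transmission rule at the next zero-crossing restores $E_\lambda=G(\xi)$, so the next maximum equals $\xi$; uniqueness then forces periodicity with $\|u_\xi\|_\infty=|u_\ast|=-u_\xi(r_2)$. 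Item (\ref{iv1+}) mirrors subcase (a): starting from $\xi\in(-\alpha,0)$ with $E_\Lambda=G(\xi)$, the orbit reaches $0$ with $(u')^2=2G(\xi)/\Lambda$; on the positive leg $E_\lambda=\lambda G(\xi)/\Lambda$, so the maximum $\xi^\ast$ satisfies $G(\xi^\ast)=\lambda G(\xi)/\Lambda$, which lies in $(0,\alpha)$ since $\lambda G(\xi)/\Lambda\le G(\xi)<G(\alpha)$; after the second zero-crossing the transmission rule restores $E_\Lambda=G(\xi)$, yielding periodicity, and the sup-norm is $|\xi|=-\xi$, because $\lambda\le\Lambda$ gives $\xi^\ast\le|\xi|$.

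The main difficulty is the careful bookkeeping of the two energies across zero-crossings: the factor $\Lambda/\lambda$ that enters each time $u$ changes sign is what produces both the threshold $\Lambda G(\xi)=\lambda G(\alpha)$ in (\ref{iii1+}) and the asymmetry between (\ref{iii1+}) and (\ref{iv1+}). Verifying that the symmetric concatenation of quarter-periods genuinely closes up after one full round-trip, so that uniqueness forces periodicity, is the conceptual heart of the proof, while the borderline subcase (\ref{iii1+})(b) needs its own asymptotic analysis at the equilibrium $u=-\alpha$.
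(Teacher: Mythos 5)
Your proposal is correct and follows essentially the same route as the paper: the piecewise energies $E_\lambda$, $E_\Lambda$ conserved on the legs where $g(u)>0$, resp.\ $g(u)<0$, together with the $\Lambda/\lambda$ transmission factor at zero crossings, are exactly the paper's identities \eqref{u'r1-1+}--\eqref{eq_energy+} and \eqref{u'r1-1+n}--\eqref{eq_energy+n}, and the resulting case analysis, reflection-symmetry periodicity argument, and reduction of subcase (c) to case (\ref{ii1+}) all coincide. The only (harmless) presentational differences are your explicit reduction to a locally Lipschitz ODE $u''=F(u)$ for well-posedness and your use of time-of-flight integrals where the paper invokes concavity/convexity to exclude horizontal asymptotes.
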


\begin{proof}
It is standard to see that there exists a unique local solution $u_\xi$ of the Cauchy problem \eqref{CP1+}. Now let $R_\xi>0$ be such that $[0,R_\xi)$ is the maximal interval where the function $u_\xi$ is defined. We have $u_\xi\in C^2([0,R_\xi))$. In the following we simply write $u,R$ instead of $u_\xi,R_\xi$, for brevity.
%\todo[inline]{Questa frase potrebbe essere diversa.}
%Moreover, being $g$ an odd function, by \eqref{g2}, we can reduce ourselves to consider only  the case $\xi\ge 0$
\medskip
\\
If ${|\xi|}=\alpha\in \R$ or $\xi=0$ then, by the assumptions on $g$ we have the constant solution $u(r)=\xi$ and so (\ref{i1+}) follows immediately. 
\medskip
\\
Let us prove \eqref{ii1+},  when $\xi>\a$. The case $\xi<-\alpha$ is analogous.\\
If $\xi >\alpha$, then $u''(0)>0$ and so $u'(r)>0$ for $r$ small. Let now define
\[
\bar r:=\sup\left\{r\in[0,R): u'>0 \hbox{ in } (0,r) \right\}.
\]
If $\bar r=R$, then $u$ is strictly convex in $\R_+$ and  the proof is concluded. Assume instead, by contradiction, that  $\bar r<R$.
In $[0,\bar r]$ we have that
%{\color{red}$u>\alpha$ and so $g(u)<0$. Thus $\M^+(u'')>0$, namely}
$u''>0$. Then $u$ satisfies $-\Lambda u''=g(u)$ and, since
\[
u'(\bar r)=\int_0^{\bar r} u''(s) ds >0,
\]
we reach a contradiction due to the definition of $\bar r$.
%Questo \`e il procedimento che hai seguito in [P].\\
%Se invce vogliamo seguire \cite{MMP}, dobbiamo definire
%\[
%r_1:=\sup\left\{r\in[0,R_\xi): u''>0 \hbox{ in } (0,r) \right\}.
%\]
%Quindi la contraddizione la otteniamo considerando
%\[
%u(r_1)=\xi+ \int_0^{r_1} \int_0^t u''(s) dsdt > \xi (>0).
%\]
%}
\medskip
\\
Let us prove (\ref{iii1+}).\\
Since $g(u(0))=g(\xi)>0$ then $u''(0) < 0$ and $u'(r)<0$, for $r$ small. 
But the assumptions on $g$ imply that $u''<0$ whenever $0 < u(r) <  \alpha$ and so $u$ remains decreasing as long it remains positive. \\
The concavity of $u$, whenever $u$ is positive, implies also that $u$ cannot have a horizontal asymptote.
Hence there exists $r_1 \in (0,R)$ such that $u(r_1)=0$.
Since $u$ is the solution of
\begin{equation}\label{cacca}
\begin{cases}
-\lambda u''=g(u), \quad\hbox{ in }[0,r_1],\\
u(0)=\xi,\\
u'(0)=0,\\
u(r_1)=0,
\end{cases}
\end{equation}
multiplying the equation in \eqref{cacca} by $u'$ and integrating in $[0,r_1]$, we have that
\[
\frac{1}{2} (u'(r_1))^2
= \int_0^{r_1} u'(s)u''(s)\ ds 
= -\frac{1}{\lambda}  \int_0^{r_1} g(u(s)) u'(s)\ ds
= \frac{1}{\lambda}  G(\xi),
\]
which implies that 
\begin{equation}\label{u'r1-1+}
u'(r_1)=-\sqrt{2 G(\xi)/\lambda}.
\end{equation}
Observe that, since $u'(r_1)<0$, then $u$ is negative in a right neighbourhood of $r_1$. Using again the assumptions on $g$, it is also convex  whenever  $-\alpha< u(r) < 0$, and, therein, $u$ solves
\begin{equation}\label{dopor1-1+}
\begin{cases}
-\Lambda u''=g(u),\\
u(r_1)=0,\\
u'(r_1)=-\sqrt{2 G(\xi)/\lambda}.
\end{cases}
\end{equation}
Moreover, for any $r$ in such an interval, multiplying the equation in \eqref{dopor1-1+} by $u'$, integrating in $[r_1,r]$ and by \eqref{u'r1-1+}, we infer  that
\begin{equation}
\label{eq_energy+}
\frac{1}{2} (u'(r))^2 - \frac{G(\xi)}{\lambda} = \int_{r_1}^{r} u'(s)u''(s) \ ds = -\frac{1}{\Lambda}  \int_{r_1}^{ r} g(u(s)) u'(s)\ ds = -\frac{ G(u(r))}{\Lambda} .
\end{equation}
We now distinguish three cases.\\
{\bf Case 1:} $\Lambda G(\xi) < \lambda G(\alpha)$.\\
Let us prove that $u$ does not touch $-\alpha$ remaining decreasing (and so convex).\\
Assume, by contradiction, that there exists $\bar{r}:=\min\{r\in \R_+ : u(r)=-\alpha \}$. By \eqref{eq_energy+}, we deduce that
\begin{equation}
\label{barr+}
0 \le \frac{1}{2} (u'(\bar{r}))^2
= \frac{G(\xi)}{\lambda}  -\frac{ G(\alpha)}{\Lambda}
\end{equation}
and we reach a contradiction with $\Lambda G(\xi) < \lambda G(\alpha)$. 
\\
Let us prove that there exists $r_2>r_1$ which is an isolated minimizer of $u$.\\
Assume, by contradiction, that $u$ remains decreasing for every $r>r_1$. Then there exists
\begin{equation*}%\label{oras+}
\lim_{r\to +\infty} u(r) = \inf\{u(r) : r\in \R_+\}=:u_\infty \in [-\alpha,0).
\end{equation*}
In this case, by the assumption on $g$, we would infer also that $u$ is convex in $[r_1,+\infty)$.\\
If $u_\infty\neq -\alpha$, then  
\[
\lim_{r\to +\infty} u''(r) = - \frac{g(u_\infty)}{\Lambda}>0.
\]
This implies that 
\[
\lim_{r\to +\infty} u'(r)=+\infty,
\]
which is in contradiction with the fact that  $u$ is decreasing.\\
%\todo[inline, color=red]{In such a case we have also that there exists
%\[
%\lim_{r\to +\infty} u(r)=+\infty
%\]
%which is in contradiction with \eqref{oras+}.
%}
If, instead, $u_\infty=-\alpha$, by \eqref{eq_energy+}, we have
\[
\lim_{r\to + \infty} (u'(r))^2
= 2  \left(\frac{G(\xi)}{\lambda}  -\frac{ G(\alpha)}{\Lambda}\right) < 0,
\] 
reaching again a contradiction.\\
Thus, let  $r_2:=\min\{r\in \R_+: u'(r)=0\}$. Clearly we have that $u(r_2)\in (-\alpha,0)$ and $u''(r_2)>0$.
\\
It is standard to prove that  $u$ is periodic with period $2r_2$.
%{\color{red}\\
%Indeed, let $v:[r_2, 2r_2 ]\to\R$ be such that $v(r):=u(2r_2-r)$.\\
%We have that
%\[
%v'(r)=-u'(2r_2-r),
%\qquad
%v''(r)=u''(2r_2-r).
%\]
%Since $u$ solves
%\[
%\begin{cases}
%-\Lambda u''=g(u) & r\in [r_1,r_2]\\
%u(r_1)=0\\
%u'(r_1)=-\sqrt{2 G(\xi)/\lambda}\\
%u'(r_2)=0
%\end{cases}
%\]
%then $v$ solves 
%\[
%\begin{cases}
%-\Lambda v''=g(v)  & r\in [r_2, 2r_2 - r_1]\\
%v(r_2)=u(r_2)\\
%v(2r_2-r_1)=u(r_1)=0\\
%v'(r_2)=-u'(r_2)=0\\
%v'(2r_2-r_1)=-u'(r_1)=\sqrt{2 G(\xi)/\lambda}
%\end{cases}
%\]
%and, since $u$ solves
%\[
%\begin{cases}
%-\lambda u''=g(u) & r\in [0,r_1]\\
%u(0)=\xi\\
%u(r_1)=0\\
%u'(0)=0\\
%u'(r_1)=-\sqrt{2 G(\xi)/\lambda}, 
%\end{cases}
%\]
%then $v$ solves 
%\[
%\begin{cases}
%-\lambda v''=g(v) & r\in [2r_2 - r_1, 2 r_2]\\
%v(2r_2-r_1)=u(r_1)=0\\
%v(2r_2)=u(0)=\xi\\
%v'(2r_2-r_1)=-u'(r_1)=\sqrt{2 G(\xi)/\lambda}\\
%v'(2r_2)=-u'(0)=0.
%\end{cases}
%\]
%Hence $u$ and $v$ coincide in $[r_2,2r_2]$ and, in particular, $u(0)=u(2r_2)=\xi$ and $u'(0)=u'(2r_2)=0$ and, therefore, we can repeat the arguments proving the periodicity of $u$.}
\\
Finally we observe that, again by \eqref{eq_energy+},
\[
\frac{G(\xi)}{\lambda}
%{\color{red}= \int_{r_1}^{r_2} u'(s)u''(s)\ ds
%= -\frac{1}{\lambda}  \int_{r_1}^{r_2} g(u(s)) u'(s) \ ds}
= \frac{ G(u(r_2))}{\Lambda},
\]
so that
\[
G(u(r_2))
= \frac{\Lambda}{\lambda} G(\xi)
> G(\xi)
\]
which implies
\[
\|u\|_{L^\infty(\R_+)} =\max \{\xi, -u(r_2)\}= -u(r_2).
\]
{\bf Case 2:} $\Lambda G(\xi) = \lambda G(\alpha)$.\\ 
We prove that there cannot exist $\tilde{r}:=\min\{r\in \R_+:u(r)\in (-\alpha,0),   u'(r)=0\}$.\\
Indeed, if it existed, 
by \eqref{eq_energy+}, we would get
\[
0
= \frac{1}{2} (u'(\tilde{r}))^2 = \frac{G(\xi)}{\lambda} -\frac{ G(u(\tilde{r}))}{\Lambda}
> \frac{G(\xi)}{\lambda} -\frac{ G(\alpha)}{\Lambda}
=0,
\]
reaching a contradiction.
\\
Therefore two different cases can occur: either there exists $\bar{r}:=\min\{r\in \R_+ : u(r)=-\alpha \}$, or $u>-\alpha$ and decreasing in $\R_+$.\\
In the first case, arguing as in \eqref{barr+}, we have $u'(\bar{r})=0$ and so, in a left neighborhood of $\bar r$,  $u$ and the constant function $v\equiv -\alpha$ would be two different solutions of the same Cauchy problem
\[
\begin{cases}
-\L u''=g(u),
\\
u(\bar r)=-\alpha,
\\
u'(\bar r)=0,
\end{cases}
\]  
reaching a contradiction with the uniqueness of the solution.\\
Therefore, the second case holds, $u$ must have a horizontal asymptote, and, arguing as in {\bf Case 1}, we deduce that $\lim_{r\to + \infty}u(r)=-\alpha$.
\\
{\bf Case 3:} $\Lambda G(\xi) > \lambda G(\alpha)$.\\
Repeating the arguments of {\bf Case 2},  there cannot exist $\tilde{r}:=\min\{r\in \R_+:u(r)\in (-\alpha,0),   u'(r)=0\}$.\\
Moreover $u$ cannot remain in $(-\alpha,0)$, being also decreasing for $r>r_1$.\\
Indeed, otherwise, $u$ should have a horizontal asymptote and, arguing as in {\bf Case 1} we would deduce that $\lim_{r\to + \infty}u(r)=-\alpha$. But,
by  \eqref{eq_energy+}, there would exist
\[
\lim_{r\to + \infty} (u'(r))^2 = 2  \left(\frac{G(\xi)}{\lambda}  -\frac{ G(\alpha)}{\Lambda}\right) >0.
\]
Thus, for a suitable constant $c>0$ and for $r$ large enough, $u'(r)<-c$, that implies that $u$ diverges to $-\infty$, reaching a contradiction. 
\\
Therefore there exists $\bar{r}:=\min\{r\in \R_+ : u(r)=-\alpha \}$. Arguing as in \eqref{barr+} we have that %$u'(\bar{r})\neq 0$ and then
$u'(\bar{r})<0$ and so $u(r)<-\alpha$ in a right neighbourhood of $\bar{r}$. In such a neighbourhood we have that $u''(r)<0$ and so $u$ solves $-\lambda u''=g(u)$. Now we can proceed arguing as in the proof of \eqref{ii1+} and we can conclude.
\medskip \\
Let us prove (\ref{iv1+}).\\
Arguing as in the first part of the proof of (\ref{iii1+}), we can prove the existence of a zero $r_1$ such that
%{\color{red}Since $-\alpha < \xi < 0$, then $g(u(0))=g(\xi)<0$ and so $u''(0) > 0$ and $u'(r)>0$ for $r$ small. 
%But the assumptions on $g$ imply that $u''>0$ whenever $-\alpha (<\xi)< u(r) < 0$ and so $u$ remains increasing as long it remains positive.
%The convexity of $u$ whenever $u$ is negative implies also that $u$ cannot have a horizontal asymptote.
%Hence there exists $r_1 \in (0,R)$ such that $u(r_1)=0$.
%Since $u$ is the solution of
%\[
%\begin{cases}
%-\Lambda u''=g(u) \quad\hbox{ in }[0,r_1]\\
%u(0)=\xi\\
%u'(0)=0\\
%u(r_1)=0
%\end{cases}
%\]
%multiplying the equation by $u'$ and integrating in $[0,r_1]$, we have that
%\[
%\frac{1}{2} (u'(r_1))^2
%= \int_0^{r_1} u'(s)u''(s)\ ds 
%= -\frac{1}{\Lambda}  \int_0^{r_1} g(u(s)) u'(s)\ ds
%= \frac{1}{\Lambda}  G(\xi)
%\]
%which implies that }
\begin{equation}\label{u'r1-1+n}
u'(r_1)=\sqrt{2 G(\xi)/\Lambda}
\end{equation}
and that in the right neighbourhood of $r_1$ where $0< u(r) < \alpha$, $u$ solves
%{\color{red}Observe that, since $u'(r_1)>0$, then $u$ is positive in a right neighbourhood of $r_1$. Using again the assumptions on $g$, it is also concave  whenever  $0< u(r) < \alpha$, and, therein, $u$ solves}
\begin{equation}\label{dopor1-1+n}
\begin{cases}
-\lambda u''=g(u),\\
u(r_1)=0,\\
u'(r_1)=\sqrt{2 G(\xi)/\Lambda}.
\end{cases}
\end{equation}
Moreover, for any $r$ in such an interval, multiplying the equation in \eqref{dopor1-1+n} by $u'$, integrating in $[r_1,r]$ and by \eqref{u'r1-1+n}, we infer  that
\begin{equation}
\label{eq_energy+n}
\frac{1}{2} (u'(r))^2 - \frac{G(\xi)}{\Lambda} = \int_{r_1}^{r} u'(s)u''(s) \ ds = -\frac{1}{\lambda}  \int_{r_1}^{ r} g(u(s)) u'(s)\ ds = -\frac{ G(u(r))}{\lambda} .
\end{equation}
Assume by contradiction that there exists 
$\bar r:=\min\{r>r_1: u(r)=-\xi\}$.
%
%
%{\color{red} $\bar{r}>r_1$ such that
%$-\xi< u(\bar{r}) \leq \alpha$. Since $G$ is strictly increasing in $[0,\alpha]$ and it is even, then
%\[
%G(u(\bar{r})) \ge {\color{red}G(-\xi)=}G(\xi).
%\]
%Thus
%\[
%\frac{G(\xi)}{\Lambda} < \frac{G(\xi)}{\lambda} \leq \frac{ G(u(\bar{r}))}{\lambda}
%\]
%and so,}
Then, by \eqref{eq_energy+n} we have $(u'(\bar{r}))^2<0$  which is a contradiction.\\
%{\color{red}Moreover, if $u$ was increasing for $r>r_1$, then there exists
%\begin{equation}
%\label{oras}
%\lim_{r\to +\infty} u(r) = u_\infty \in (0,-\xi].
%\end{equation}
%\todo[inline]{Qui potremmo dire arguing as in Case 1.}
%In such a case
%\[
%\lim_{r\to +\infty} u''(r) = - \frac{g(u_\infty)}{\lambda}<0
%\]
%that implies
%\[
%\lim_{r\to +\infty} u'(r)=-\infty
%\]
%in contradiction with the fact that $u$ is increasing.\\
%Thus }
Moreover, arguing as in {\bf Case 1}, $u$ cannot remain increasing in $(r_1,+\infty)$ and so there exists $r_2>r_1$ such that $u(r_2)\in (0,-\xi)$, $u'(r_2)=0$ and $u''(r_2)<0$ ($r_2$ is an isolated maximizer). 
Also in this case $u$ is periodic with period $2r_2$ and
%{\color{red}Indeed let $v:[r_2, 2r_2 ]\to\R$ be such that $v(r):=u(2r_2-r)$.\\
%We have that
%\[
%v'(r)=-u'(2r_2-r),
%\qquad
%v''(r)=u''(2r_2-r).
%\]
%Since $u$ solves
%\[
%\begin{cases}
%-\lambda u''=g(u) \ \ r\in [r_1,r_2]\\
%u(r_1)=0\\
%u'(r_1)=\sqrt{2 G(\xi)/\Lambda}, u'(r_2)=0
%\end{cases}
%\]
%then $v$ solves 
%\[
%\begin{cases}
%-\lambda v''=g(v) \ \ r\in [r_2, 2r_2 - r_1]\\
%v(r_2)=u(r_2), v(2r_2-r_1)=0\\
%v'(r_2)=0, v'(2r_2-r_1)=-\sqrt{2 G(\xi)/\Lambda}
%\end{cases}
%\]
%and, since $u$ solves
%\[
%\begin{cases}
%-\Lambda u''=g(u) \ \ r\in [0,r_1]\\
%u(0)=\xi, u(r_1)=0\\
%u'(0)=0, u'(r_1)=\sqrt{2 G(\xi)/\Lambda}, 
%\end{cases}
%\]
%then $v$ solves 
%\[
%\begin{cases}
%-\Lambda v''=g(v) \ \ r\in [2r_2 - r_1, 2 r_2]\\
%v(2r_2-r_1)=0, v(2r_2)=\xi,\\
%v'(2r_2-r_1)=-\sqrt{2 G(\xi)/\Lambda}, v'(2r_2)=0
%\end{cases}
%\]
%and so we get the claim.}
%Finally we observe that, in this case, arguing as in \eqref{eq_energy+n},
%%\[
%%- \frac{G(\xi)}{\Lambda}
%%= \int_{r_1}^{r_2} u'u''
%%= -\frac{1}{\lambda}  \int_{r_1}^{r_2} g(u) u'
%%= -\frac{ G(u(r_2))}{\lambda}
%%\]
%%so that
%\[
%G(u(r_2))
%= \frac{\lambda}{\Lambda} G(\xi)
%< G(\xi)
%\]
%which implies
$\|u\|_{L^\infty(\R_+)}
% =\max \{-\xi, u(r_2)\}
= -\xi$.
\end{proof}

We conclude this section treating briefly the case $\M^-$. 

Let us consider
\begin{equation}
\label{CP1-}
\begin{cases}
\M^-(u'')+g(u)=0,\\
u(0)=\xi,\\
u'(0)=0,
\end{cases}
\end{equation}
where we recall that
\[
\M^-(u'')=
\begin{cases}
\l u'' & \hbox{ if } u'' \ge 0,\\
\L u'' & \hbox{ if } u'' < 0.
\end{cases}
\]
Observing that, by \eqref{g2}, if $u$ is a solution of 
\begin{equation*}
\begin{cases}
\M^-(u'')+g(u)=0,
\\
u(0)=\xi,
\\
u'(0)=0,
\end{cases}
\end{equation*}
then, clearly, $-u$ is a solution of 
\begin{equation*}
\begin{cases}
\M^+(u'')+g(u)=0,
\\
u(0)=-\xi,
\\
u'(0)=0,
\end{cases}
\end{equation*}
by Theorem \ref{n=1}, we obtain  immediately 
\begin{theorem}\label{n=1-}
Assume \eqref{g1}-\eqref{g3}. For any $\xi \in \R$ there exists a solution $u_\xi\in C^2([0,R_\xi))$ of the Cauchy problem \eqref{CP1-}, where $R_\xi\in(0,+\infty]$ is such that $[0,R_\xi)$ is the maximal interval where the function $u_\xi$ is defined. Moreover, we have
\begin{enumerate}[label=(\roman{*}), ref=\roman{*}]
	\item \label{i1-} if $|\xi|=\a \in \R$ or $\xi=0$, then $u_\xi\equiv \xi$;
	\item \label{ii1-} if $\a \in \R$ and  $|\xi| > \a $, then $|u_\xi|$ strictly increases to $+\infty$ on $[0, R_\xi )$;
\item \label{iii1-}  if $0 < \xi < \a$, then $R_\xi=+\infty$ and $u_\xi$ is oscillating and periodic with $\|u_\xi\|_{L^\infty(\R_+)}=\xi$.
	\item \label{iv1-} if $-\a < \xi < 0$, we distinguish three cases:
\begin{enumerate}
		\item if $\Lambda G(\xi) < \lambda G(\alpha)$, then  $R_\xi=+\infty$ and $u_\xi$ is oscillating and periodic with $\|u_\xi\|_{L^\infty(\R_+)}=u_\xi(r_2)<\alpha$, where $r_2$ is the first maximizer of $u_\xi$;
		\item if $\Lambda G(\xi) = \lambda G(\alpha)$, then $R_\xi=+\infty$ and  $-\alpha <u_\xi<\alpha$ in $\R_+$; moreover $u_\xi$ is increasing in $\R_+$ and $\lim_{r\to + \infty} u_\xi(r)=\alpha$;
		\item if $\Lambda G(\xi) > \lambda G(\alpha)$, then  $u_\xi$  increases to $+\infty$ on $[0, R_\xi )$.
	\end{enumerate}
\end{enumerate}
\end{theorem}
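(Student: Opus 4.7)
The plan is to deduce Theorem \ref{n=1-} directly from Theorem \ref{n=1} via the symmetry $v:=-u$ already highlighted in the paragraph preceding the statement. The key algebraic fact, which I would verify first from the definitions of $\M^\pm$, is the pointwise identity $\M^+(v'')=-\M^-(u'')$ for every $u\in C^2$. This holds because $v''\ge 0$ exactly when $u''\le 0$, which interchanges the roles of $\lambda$ and $\Lambda$ in the two operators.

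Combining this with the oddness of $g$ (assumption \eqref{g2}), I would check that whenever $u$ solves \eqref{CP1-} with initial datum $u(0)=\xi$, the function $v=-u$ solves \eqref{CP1+} with $v(0)=-\xi$ and $v'(0)=0$: indeed $\M^+(v'')=-\M^-(u'')=g(u)=g(-v)=-g(v)$. Theorem \ref{n=1} then supplies a unique local solution $v_{-\xi}\in C^2([0,R_{-\xi}))$, so setting $u_\xi:=-v_{-\xi}$ gives the desired solution of \eqref{CP1-} on the maximal interval $R_\xi=R_{-\xi}$, with the same $C^2$ regularity.

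The last step is a direct translation of each alternative of Theorem \ref{n=1} applied to $v_{-\xi}$ into the corresponding statement about $u_\xi=-v_{-\xi}$. The correspondence is: \eqref{i1+}$\mapsto$\eqref{i1-}; \eqref{ii1+}$\mapsto$\eqref{ii1-}; \eqref{iv1+}, which concerns $-\alpha<-\xi<0$, i.e.\ $0<\xi<\alpha$, maps to \eqref{iii1-}; and \eqref{iii1+}, which concerns $0<-\xi<\alpha$, i.e.\ $-\alpha<\xi<0$, together with its three subcases, maps to \eqref{iv1-}. Under this translation, decreasing becomes increasing, minimizers of $v$ become maximizers of $u$, the limit $-\alpha$ becomes $+\alpha$, divergence to $-\infty$ becomes divergence to $+\infty$, and $\|u_\xi\|_{L^\infty}=\|v_{-\xi}\|_{L^\infty}$. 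The trichotomy $\Lambda G(\xi)<\lambda G(\alpha)$, $\Lambda G(\xi)=\lambda G(\alpha)$, $\Lambda G(\xi)>\lambda G(\alpha)$ is preserved under $\xi\mapsto -\xi$ since, by \eqref{g2}, $G$ is even, so $G(\xi)=G(-\xi)$.

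Since the entire argument is a relabelling, there is no genuine obstacle; the only care needed is the bookkeeping of signs and of the maximizer/minimizer terminology in the three subcases of \eqref{iv1-}, to make sure each one exactly mirrors the corresponding subcase of \eqref{iii1+}.
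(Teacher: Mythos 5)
Your proposal is correct and is essentially the paper's own argument: the paper also deduces Theorem \ref{n=1-} from Theorem \ref{n=1} via the observation that, by \eqref{g2}, $u$ solves \eqref{CP1-} with datum $\xi$ if and only if $-u$ solves \eqref{CP1+} with datum $-\xi$, and then relabels the cases (your remark that $G$ is even, so the trichotomy $\Lambda G(\xi)\lessgtr\lambda G(\alpha)$ is unchanged under $\xi\mapsto-\xi$, is the right bookkeeping point).
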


\section{The case $N>1$}\label{se2}

In this section we deal with the multidimensional case. As observed, for example, in \cite{FQT}, we recall, that if $N>1$  and  if $u$ is a $C^2$ radial function of $\RN$, with the abuse of notation $u(x)=u(r)$, with $r=|x|$, we have 
\[
\M^+(D^2 u)(r)= \theta u''(r)+\frac{N-1}{r}\Theta u'(r),
\]
where
\[
\begin{array}{lllll}
\theta=\Lambda& \hbox{when }u''\ge 0 & \hbox{and} & \theta=\lambda & \hbox{when }u''<0; 
\\
\Theta=\Lambda & \hbox{when }u'\ge 0 & \hbox{and} & \Theta=\lambda & \hbox{when }u'<0.
\end{array}
\]
Therefore, since we are interested in radial solutions of \eqref{M}, in the case $\M^+$,  we will study the following  initial value problem of the ordinary differential equation
\begin{equation}\label{eqr}
\begin{cases}
\dis \theta u''+\frac{N-1}{r}\Theta u'+g(u)=0,
\\
u(0)=\xi,
\\
u'(0)=0,
\end{cases}
\end{equation}
for a certain $\xi\in \R$.

Our main result for $\M^+$, is

\begin{theorem}\label{n>1+}
Assume \eqref{g1}-\eqref{g4}. For any $\xi \in \R$ there exists a solution $u_\xi\in C^2([0,R_\xi))$ of the Cauchy problem (\ref{eqr}), where $R_\xi\in (0,+\infty]$ is such that $[0,R_\xi)$ is the maximal interval where the function $u_\xi$ is defined. Moreover, we have
\begin{enumerate}[label=(\roman{*}), ref=\roman{*}]
\item \label{n>1+i} if $|\xi|=\a \in \R$ or $\xi=0$, then $u_\xi\equiv \xi$;
\item \label{n>1+ii} if $\a \in \R$ and  $|\xi| > \a $, then $|u_\xi|$ strictly increases to $+\infty$ on $[0, R_\xi )$;
\item \label{n>1+iii} if $0 < \xi < \a$ and such that
\begin{equation}\label{xi}
\Lambda G(\xi)\le\lambda G(\alpha),
\end{equation}
then $R_\xi=+\infty$ and $u_\xi$ is oscillating and localized with $\|u_\xi\|_{L^\infty(\R_+)}<\alpha$;
\item \label{n>1+iv}if $-\alpha < \xi < 0$, then $R_\xi=+\infty$ and $u_\xi$ is oscillating and localized with $\|u_\xi\|_{L^\infty(\R_+)}=-\xi<\alpha$.
\end{enumerate}
\end{theorem}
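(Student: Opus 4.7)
The plan is to construct $u_\xi$ as the unique local $C^2$ solution of the singular Cauchy problem \eqref{eqr} (the term $\tfrac{N-1}{r}\Theta u'$ has a removable singularity at $r=0$ because $u'(0)=0$, and the equation at the origin reads $N\theta\,u''(0)+g(\xi)=0$ with $\theta$ determined by the sign of $-g(\xi)$) and then to continue it on a maximal interval $[0,R_\xi)$. Parts \eqref{n>1+i} and \eqref{n>1+ii} mirror the corresponding parts of Theorem \ref{n=1}: for \eqref{n>1+i} the constant is a solution by uniqueness, and for \eqref{n>1+ii} (say $\xi>\alpha$) a bootstrap on the signs of $u',u''$ shows that $u$ remains strictly convex and increasing, and hence $u\to+\infty$; the friction term does not interfere since only signs are used.

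For \eqref{n>1+iii} and \eqref{n>1+iv} the key tool is the piecewise energy $E_\theta(r):=\tfrac{\theta}{2}(u'(r))^2+G(u(r))$, where $\theta\in\{\lambda,\Lambda\}$ is the current value dictated by the sign of $u''(r)$. On any maximal subinterval $J$ where the sign of $u''$ is constant, multiplying \eqref{eqr} by $u'$ gives
\[
\frac{d}{dr}E_\theta(r)=-\frac{N-1}{r}\Theta(u'(r))^2\le 0,
\]
so $E_\theta$ strictly decreases on $J\cap\{u'\ne 0\}$ (genuine dissipation from $N\ge 2$). Across a transition $r_\ast$ with $u''(r_\ast)=0$, $E_\theta$ has a jump of size $\tfrac{\Lambda-\lambda}{2}(u'(r_\ast))^2$, positive when $u''$ passes from $-$ to $+$ and negative in the opposite direction.

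For \eqref{n>1+iii}, $0<\xi<\alpha$ forces $u''(0)<0$; the argument of Case 1 in the proof of Theorem \ref{n=1} together with strict monotonicity of $E_\lambda$ on $(0,r_1)$ produces a first zero $r_1$ with $\tfrac{\lambda}{2}(u'(r_1))^2<G(\xi)$. Past $r_1$ the equation forces $u''>0$, producing the upward jump
\[
E_\Lambda(r_1)=\tfrac{\Lambda}{2}(u'(r_1))^2<\tfrac{\Lambda}{\lambda}G(\xi)\le G(\alpha),
\]
and since $E_\Lambda$ decreases on the subsequent convex arc and $G(u)\le E_\Lambda<G(\alpha)$, $u$ stays strictly above $-\alpha$. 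Arguing once more as in Case 1 of Theorem \ref{n=1}, I extract a first local minimum $r_2\in(-\alpha,0)$, and iterating this construction yields an unbounded alternating sequence of extrema $\{r_k\}$ with $|u(r_k)|<\alpha$, which is the oscillation. Case \eqref{n>1+iv} is the complementary situation ($\xi\in(-\alpha,0)$, $u''(0)>0$); the same machinery, starting now with $E_\Lambda$ on the first convex arc, yields oscillation, and the strict dissipation forces $G(u(r_2))<G(\xi)$, so $\|u\|_{L^\infty}=-\xi$ is attained only at the origin.

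Localization follows by writing, between consecutive extrema $r_k,r_{k+1}$, the difference $G(u(r_{k+1}))-G(u(r_k))$ as the sum of the strictly negative friction integral $-\int_{r_k}^{r_{k+1}}\tfrac{N-1}{r}\Theta(u')^2\,dr$ and the signed contributions $\pm\tfrac{\Lambda-\lambda}{2}(u'(r_\ast))^2$ coming from the interior transitions $u''(r_\ast)=0$. A quantitative control of the possibly positive jumps by the preceding friction dissipation shows that $G(u(r_k))$ is strictly decreasing to some $\ell\ge 0$; if $\ell>0$ the per-cycle dissipation would be bounded below by a positive constant and the total dissipation would diverge, contradicting boundedness of the total energy. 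Hence $\ell=0$, so $u(r_k)\to 0$, and the equation together with $u'(r_k)=0$ forces $u,u',u''\to 0$ at infinity. The chief obstacle is exactly this quantitative control of the up-jumps at concave-to-convex transitions by the preceding friction dissipation, which is where the hypothesis $\Lambda G(\xi)\le\lambda G(\alpha)$ and the ratio $\Lambda/\lambda$ will be essential — and where the analysis departs most substantially from the Laplacian case of \cite{MMP}.
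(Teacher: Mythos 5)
Your proposal has two genuine gaps, both at points where the multidimensional problem departs essentially from the one-dimensional one. First, the existence of the zeros. You obtain the first zero $r_1$ (and, implicitly, every subsequent zero) by invoking ``the argument of Case 1 in the proof of Theorem \ref{n=1}'', i.e.\ the one-dimensional concavity argument. That argument does not transfer: for $N\ge 2$ and $0<u<\alpha$, $u'<0$, the equation gives $\theta u''=-\tfrac{N-1}{r}\lambda u'-g(u)$, whose right-hand side has no definite sign, so $u$ need not stay concave and could a priori decrease to a nonnegative horizontal asymptote. The paper excludes this by a Sturm-type comparison: setting $v(r)=r^{(N-1)/2}u(r)$ one gets $v''+\sigma_+(r)v\le 0$ with $\sigma_+(r)\ge c_0>0$ eventually, and this is exactly where hypothesis \eqref{g4} ($g'(0)>0$) is used --- a hypothesis your proof never invokes, even though without it the conclusion is false. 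A second comparison with a different weight $r^{(N-1)\Lambda/(2\lambda)}$ is needed for the zeros reached from below, because the coefficient $\Theta$ has changed.

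Second, and more seriously, the localization. You candidly flag the ``quantitative control of the up-jumps by the preceding friction dissipation'' as the chief obstacle, but the strategy you propose cannot work: at a concave-to-convex transition $r_\ast$ the jump of $E_\theta$ is $\tfrac{\Lambda-\lambda}{2}(u'(r_\ast))^2$, which is of the same order as the energy itself, while the friction dissipated over the preceding arc is $O\big(\tfrac{1}{r}(u')^2\big)$ per unit length, hence $o(1)$ relative to the jump as $r\to+\infty$. So for large $r$ the up-jumps are \emph{not} absorbed by the friction, and the piecewise energy need not decrease from cycle to cycle. The paper's argument is structurally different: it never compensates jumps, but instead chains the monotonicity of the single continuous function $E_\Lambda$ along the subfamily $\mathcal{I}=\bigcup_k[r_{1+4k},r_{2+4k}]$, using that between consecutive zeros $r_{3+4k}$ and $r_{5+4k}$ one has $(u'(r_{5+4k}))^2<(u'(r_{3+4k}))^2$ (via the decrease of $E_\lambda$ there), which transfers to $E_\Lambda$ with no loss because $u$ vanishes at both points. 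Granting a uniform lower bound on $|u'|$ near the zeros and a uniform upper bound on the length of each cycle (the paper's Claims 1--7, which themselves require the a priori bounds of Lemma \ref{lemma0}), the dissipation of $E_\Lambda$ restricted to $\mathcal{I}$ alone already diverges like a harmonic series if the amplitudes do not tend to zero, contradicting $E_\Lambda\ge 0$. Your parts \eqref{n>1+i}, \eqref{n>1+ii} and the use of \eqref{xi} to keep $u$ above $-\alpha$ are essentially the paper's arguments, but as it stands the proposal does not prove oscillation (missing Sturm comparison) nor localization (unworkable jump-absorption scheme).
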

%\todo[inline]{\eqref{xi} non \`e ottimale. Altri casi.}
%\todo[inline]{$u$ dovrebbe essere convessa in $[r_1,r_2]$. cosa succede in $[0,r_1]$? Sappiamo che $u$ \`e concava vicino $0$, mentre  $u$ \`e convessa in un intorno di $r_1$ poich\`e  $u'(r_1)<0$ e $u(r_1)=0$: almeno un cambiamento di convessit\`a! Molto probabilmente non si pu\`o essere pi\`u precisi, perch\'e il tutto dipende da come \`e fatta $g$ e dalle sue monotonie}

For what concerns $\M^-$, instead, we have that
\[
\M^-(D^2 u)(r)= \psi u''(r)+\frac{N-1}{r}\Psi u'(r),
\]
where
\[
\begin{array}{lllll}
\psi=\lambda & \hbox{when }u''\ge 0 & \hbox{and} & \psi=\Lambda & \hbox{when }u''<0; 
\\
\Psi=\lambda & \hbox{when }u'\ge 0 & \hbox{and} & \Psi=\Lambda & \hbox{when }u'<0.
\end{array}
\]
Thus we consider the following  initial value problem of the ordinary differential equation
\begin{equation}\label{eqr-}
\begin{cases}
\dis \psi u''+\frac{N-1}{r}\Psi u'+g(u)=0,
\\
u(0)=\xi,
\\
u'(0)=0,
\end{cases}
\end{equation}
for a certain $\xi\in \R$. In this case, observing that, if $u$ is a solution of \eqref{eqr-}, then $-u$ is a solution of \begin{equation*}
\begin{cases}
\dis \theta u''+\frac{N-1}{r}\Theta u'+g(u)=0,
\\
u(0)=-\xi,
\\
u'(0)=0,
\end{cases}
\end{equation*}
as an immediate consequence of Theorem \ref{n>1+}, we have
\begin{theorem}\label{n>1-}
Assume \eqref{g1}-\eqref{g4}. For any $\xi \in \R$ there exists a solution $u_\xi\in C^2([0,R_\xi))$ of the Cauchy problem (\ref{eqr-}), where $R_\xi>0$ is such that $[0,R_\xi)$ is the maximal interval where the function $u_\xi$ is defined. Moreover, we have
\begin{enumerate}[label=(\roman{*}), ref=\roman{*}]
\item \label{n>1-i} if $|\xi|=\a \in \R$ or $\xi=0$, then $u_\xi\equiv \xi$;
\item \label{n>1-ii} if $\a \in \R$ and  $|\xi| > \a $, then $|u_\xi|$ strictly increases to $+\infty$ on $[0, R_\xi )$;
\item \label{n>1-iii} if $0 < \xi < \a$
then $R_\xi=+\infty$ and $u_\xi$ is oscillating and localized with $\|u_\xi\|_{L^\infty(\R_+)}=\xi<\alpha$;
\item \label{n>1-iv}if $-\alpha < \xi < 0$,  and such that
\begin{equation*}%\label{xi32}
\Lambda G(\xi)\le\lambda G(\alpha),
\end{equation*}
then $R_\xi=+\infty$ and $u_\xi$ is oscillating and localized with $\|u_\xi\|_{L^\infty(\R_+)}<\alpha$.
\end{enumerate}
\end{theorem}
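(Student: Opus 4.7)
The plan is to exploit the odd symmetry of $g$ together with the sign-flip that interchanges the two Pucci operators. Setting $v:=-u$, I observe that $v''\ge 0 \Leftrightarrow u''\le 0$ and $v'\ge 0 \Leftrightarrow u'\le 0$, so the piecewise coefficients $(\psi,\Psi)$ appearing in \eqref{eqr-} for $u$ coincide with the coefficients $(\theta,\Theta)$ appearing in \eqref{eqr} for $v$. Combined with $g(-u)=-g(u)$, which is \eqref{g2}, a direct substitution shows that $u\in C^2([0,R_\xi))$ solves \eqref{eqr-} with data $u(0)=\xi$, $u'(0)=0$ if and only if $v=-u$ solves \eqref{eqr} with data $v(0)=-\xi$, $v'(0)=0$, on the same maximal interval. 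In particular, existence, uniqueness, and the length $R_\xi$ of the maximal interval of definition are inherited directly from Theorem \ref{n>1+}.

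It then remains to transport the four dichotomies through the involution $u\leftrightarrow -u$, using that $G$ is even (since $g$ is odd) so that $G(\xi)=G(-\xi)$. Items \eqref{n>1-i} and \eqref{n>1-ii} follow at once from \eqref{n>1+i} and \eqref{n>1+ii} of Theorem \ref{n>1+}, because $|\xi|=|-\xi|$ and the properties ``identically constant'' and ``$|\cdot|$ strictly increasing to $+\infty$'' are preserved by the sign flip. For \eqref{n>1-iii}, if $0<\xi<\alpha$ then $-\alpha<-\xi<0$, so \eqref{n>1+iv} produces an oscillating and localized $v$ with $\|v\|_{L^\infty(\R_+)}=-(-\xi)=\xi<\alpha$; oscillation, localization and the $L^\infty$ norm are invariant under $u\mapsto -u$, so $u=-v$ enjoys the same properties. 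Finally, for \eqref{n>1-iv}, if $-\alpha<\xi<0$ and $\Lambda G(\xi)\le \lambda G(\alpha)$, then $0<-\xi<\alpha$ and $\Lambda G(-\xi)=\Lambda G(\xi)\le \lambda G(\alpha)$, so \eqref{n>1+iii} applies to $v$ and transfers back to $u$.

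No substantive obstacle arises: the argument is a pure symmetry reduction to Theorem \ref{n>1+}, and the only verification needed is the compatibility of the piecewise definitions of the Pucci coefficients under $u\mapsto -u$, which is immediate from the swap of signs in $u''$ and $u'$. All the genuine analytical content — existence on $[0,R_\xi)$, the delicate dichotomy driven by the comparison between $\Lambda G(\xi)$ and $\lambda G(\alpha)$, and the decay of $u$ together with its first and second derivatives at infinity — has already been carried out in Theorem \ref{n>1+}, so nothing new is required here beyond the oddness assumption \eqref{g2}.
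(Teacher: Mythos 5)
Your proposal is correct and is exactly the paper's argument: the paper also deduces Theorem \ref{n>1-} from Theorem \ref{n>1+} by observing that if $u$ solves \eqref{eqr-} with $u(0)=\xi$ then $-u$ solves \eqref{eqr} with initial value $-\xi$, using \eqref{g2} and the evenness of $G$ to translate the hypotheses and conclusions of the four cases. Your additional remarks on the swap of the piecewise coefficients and the invariance of oscillation, localization and the $L^\infty$ norm under $u\mapsto -u$ are the (routine) details the paper leaves implicit.
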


Therefore, in what follows, we will prove only Theorem \ref{n>1+}. 

As already observed in the Introduction, we have to deal with the following two {\em energy} functions 
\begin{equation*}
E_\lambda (r)=\frac{\lambda}{2}(u'(r))^2+G(u(r)) \
\hbox{ and } \ 
E_\Lambda (r)=\frac{\Lambda}{2}(u'(r))^2+G(u(r)) .
\end{equation*}

Some partial informations on the monotonicity properties of $E_\l$ and $E_\L$ are known. Indeed we have
\begin{lemma}[\cite{FQT}, Lemma 2.1]\label{le:Lemma21+}
	Let $u$ be a solution of \eqref{eqr} in the case $\M^+$ and $I\subset \R$ an interval. Then
	\begin{enumerate}[label=(\roman{*}), ref=\roman{*}]
		\item if $u'>0$ in $I$, then both $E_\lambda(r)$ and $E_\Lambda(r)$ are decreasing in $I$;
		\item if $u' < 0$ in $I$, then $E_\lambda(r)$ decreases when $g(u) > 0$ and $E_\Lambda(r)$ decreases when $g(u) < 0$.
	\end{enumerate}
\end{lemma}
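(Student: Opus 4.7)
The plan is to differentiate the two energy functions and then eliminate the sign-indeterminate term using the ODE \eqref{eqr}. Namely,
\[
E_\lambda'(r) = u'(r)\bigl[\lambda u''(r) + g(u(r))\bigr], \qquad E_\Lambda'(r) = u'(r)\bigl[\Lambda u''(r) + g(u(r))\bigr],
\]
and the ODE in the form $\theta u'' + g(u) = -\frac{N-1}{r}\Theta u'$ lets me rewrite each bracket above as an explicit sum of terms that, in each sign configuration of $(u',u'')$, have the same sign. The argument then reduces to bookkeeping the four combinations of signs of $u''$ (which fixes $\theta$) and $u'$ (which fixes $\Theta$).

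For part (i), $u' > 0$ forces $\Theta = \Lambda$. If moreover $u'' \ge 0$, then $\theta = \Lambda$ and the ODE gives $g(u) = -\Lambda u'' - \tfrac{(N-1)\Lambda}{r} u'$, so
\[
\lambda u'' + g(u) = (\lambda - \Lambda)u'' - \tfrac{(N-1)\Lambda}{r}\,u', \qquad \Lambda u'' + g(u) = -\tfrac{(N-1)\Lambda}{r}\,u',
\]
both non-positive because $\lambda \le \Lambda$, $u'' \ge 0$, $u' > 0$. Multiplying by $u' > 0$ gives $E_\lambda', E_\Lambda' \le 0$. If instead $u'' < 0$, then $\theta = \lambda$ and the same substitution yields $\lambda u'' + g(u) = -\tfrac{(N-1)\Lambda}{r}u'$ and $\Lambda u'' + g(u) = (\Lambda - \lambda)u'' - \tfrac{(N-1)\Lambda}{r}u'$, again both $\le 0$. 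So both energies decrease throughout $I$.

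For part (ii), $u' < 0$ forces $\Theta = \lambda$. Suppose $g(u) > 0$: if $u'' \ge 0$ then $\lambda u'' + g(u) \ge g(u) > 0$ trivially; if $u'' < 0$ then $\theta = \lambda$ and the ODE gives directly $\lambda u'' + g(u) = -\tfrac{(N-1)\lambda}{r}u' > 0$. In either subcase, multiplication by $u' < 0$ yields $E_\lambda' < 0$. Now suppose $g(u) < 0$: the subcase $u'' < 0$ is impossible, since as above it would force $g(u) = -\lambda u'' - \tfrac{(N-1)\lambda}{r}u' > 0$; hence $u'' \ge 0$, $\theta = \Lambda$, and $\Lambda u'' + g(u) = -\tfrac{(N-1)\lambda}{r}u' > 0$, so $E_\Lambda' < 0$.

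The only mild obstacle is keeping track of which of $\lambda, \Lambda$ stands where: $\theta$ depends on the sign of $u''$ and $\Theta$ on the sign of $u'$, so one has to be careful when substituting into each bracket. Once the correct form is written down, in every admissible sign pattern the ODE eliminates precisely the term whose sign would otherwise be unclear, leaving a sum of terms of the required sign.
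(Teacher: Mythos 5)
Your proof is correct: differentiating $E_\lambda$ and $E_\Lambda$, substituting $g(u)$ from the ODE with the appropriate $\theta,\Theta$ in each sign configuration of $(u'',u')$, and checking the signs (including ruling out $u''<0$ when $u'<0$ and $g(u)<0$) is exactly the standard argument. The paper does not reprove this lemma but cites \cite{FQT}, Lemma 2.1, whose proof proceeds by the same direct computation, so your approach is essentially the same.
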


Arguing as in \cite{FQ2004,NN}, there exists a local solution $u_\xi$ of the Cauchy problem \eqref{eqr}. Now let $R_\xi\in (0,+\infty]$ be such that $[0,R_\xi)$ is the maximal interval where the function $u_\xi$ is defined. We have $u_\xi\in C^2([0,R_\xi))$. In the following we simply write $u,R$ instead of $u_\xi,R_\xi$, for brevity.

The proof of case (\ref{n>1+i}) follows immediately by the assumptions on $g$. For what concerns the other three points, since the proof is quite long and involved, we divide it into three different sections.

\subsection{Proof of (\ref{n>1+ii}) of Theorem \ref{n>1+}}

\
\\
Let us prove (\ref{n>1+ii}) in the case $\xi>\a $. The case $\xi<-\a $ is analogous.
\\
We first prove that $u$ is strictly increasing in $[0,R)$. 
\\
We can find a sequence $\{s_n\}_n\subset \R_+$, such that $s_n \searrow 0$ and $u'(s_n)$ and $u''(s_n)$ have fixed sign and so that, $\theta$ and $\Theta$, evaluated on $s_n$, do not depend on $n$. Then, since
\begin{equation}
\label{behav0}
\begin{split}
-g(\xi)
&=-g(u(0))
=\lim_{n}\left( \theta u''(s_n)+\frac{N-1}{s_n} \Theta u'(s_n)\right)
\\
&=\lim_{n}\left( \theta u''(s_n)+(N-1)\Theta\frac{u'(s_n)-u'(0)}{s_n}\right)
=\big(\theta+(N-1)\Theta\big)u''(0),
\end{split}
\end{equation}
we have that $u''(0)>0$. Therefore $u$ is convex and strictly increasing in a right neighbourhood of $0$ and, by \eqref{behav0}, we have $-g(\xi)=N\L u''(0)$.\\
%Clearly, if $u$ remains convex in the whole $[0,R)$, then $\lim_{r\to R^-}u(r)=+\infty$. 
Suppose, by contradiction that $u$ is not always increasing, then  there exists $\bar r\in [0,R)$, such that $u(\bar{r})>\xi$, $u'(\bar r)=0$, $u''(\bar r)\le 0$. Then 
\[
\underbrace{\theta u''(\bar r)}_{\le 0}+\underbrace{\frac{N-1}{\bar r}\Lambda u'(\bar r)}_{=0}+\underbrace{g(u(\bar r))}_{<0}=0,
\]
which is a contradiction and hence we have that $u$ is increasing.\\
We want to prove that $u$ diverges positively. 
\\
Assume by contradiction that there exists
\begin{equation}\label{asintoto}
\lim_{r\to +\infty} u(r)=u_\infty\in\R,
\end{equation}
where, clearly, $u_\infty>\xi>\alpha$.
Since $u$ is increasing in $\R_+^*$, then, by Lemma \ref{le:Lemma21+},  $E_\lambda$ is decreasing in $\R_+^*$. Moreover, by \eqref{asintoto}, we have that $G(u(r)) \to G(u_\infty)$ as $r \to +\infty$. Thus there exists
\[
\lim_{r \to +\infty} (u'(r))^2 \in \mathbb{R}
\]
and so, since $u'(r)>0$, there exists
\begin{equation}
\label{asinup}
\lim_{r \to +\infty} u'(r) \in \mathbb{R}.
\end{equation}
Hence, combining \eqref{asintoto} and \eqref{asinup} we immediately obtain that
\begin{equation*}
%\label{lim0+}
\lim_{r\to +\infty} u'(r)=0.
\end{equation*}
%Moreover, in such a case we have
%\begin{equation}
%\label{lim0+}
%\lim_{r\to +\infty} u'(r)=0.
%\end{equation}
%Indeed, otherwise, since $u$ admits a  horizontal asymptote, there would exist $c>0$ and two positive increasing diverging sequences $\{s_n\}_n$ and $\{t_n\}_n$, such that $s_n\in (t_n,t_{n+1})$, $u'(s_n)\ge c$, for $n$ sufficiently large, and $u'(t_n)\to 0$, as $n\to +\infty$.
%Therefore we would have $$E_\lambda(s_n)\ge \frac{\lambda}{2}c^2+G(u_\infty),$$ for $n$ sufficiently large, while $$E_\lambda(t_n)\to G(u_\infty),$$ as $n\to +\infty$, and so there does not exist the limit of $E_\l$ at infinity. On the other hand, since $u'(r)>0$ for any $r>0$, by Lemma \ref{le:Lemma21+}, we infer that $E_\l$ is decreasing in $\R_+^*$ and so it admits a limit at infinity and we reach a contradiction, proving \eqref{lim0+}.
%\\
Observe now that, by \eqref{asintoto}
there exists a sequence $r_n\to+\infty$ such that $u''(r_n)<0$.
Then
\[
0>\lambda u''(r_n)
=- \Lambda \frac{N-1}{r_n} u'(r_n) -g(u(r_n))
\to -g(u_\infty)>0,
\]
which gives a contradiction.

\subsection{Proof of  (\ref{n>1+iii}) of Theorem \ref{n>1+}}

\
\\
Arguing as in \eqref{behav0}, we infer that $N\l u''(0)=-g(\xi)<0$
and $u$ is decreasing and concave in a right neighbourhood of $0$.\\
We divide the proof into some intermediate lemmas.

\begin{lemma}\label{le:step1}
$u'<0$ until $u$ attains a first zero in a certain $r_1>0$.
\end{lemma}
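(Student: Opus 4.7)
The strategy is to prove three assertions in sequence: (a) $u$ is concave and strictly decreasing in a right neighborhood of $0$; (b) $u'$ cannot vanish at any point where $u$ is still positive; (c) $u$ cannot remain strictly positive throughout $[0, R)$, so $u$ attains a first zero at some finite $r_1$.

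For (a), I would apply the same limit computation as in \eqref{behav0} at $r=0$, which gives $(\theta + (N-1)\Theta)u''(0) = -g(\xi)$. Since $\xi \in (0,\alpha)$ implies $g(\xi) > 0$ by \eqref{g3}, and the prefactor is strictly positive, we conclude $u''(0) < 0$; this fixes $\theta = \Theta = \lambda$ near $0^+$, so $u''(0) = -g(\xi)/(N\lambda)$ and $u'(r) < 0$ on some initial interval $(0, \delta)$.

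For (b), set $\bar r := \inf\{r \in (0, R) : u'(r) = 0\}$ and suppose toward contradiction that $u(\bar r) > 0$. Since $u'(r) < 0$ for $r \in (0, \bar r)$ and $u'(\bar r) = 0$, the left derivative forces $u''(\bar r) \ge 0$. Reading the ODE in \eqref{eqr} at $\bar r$ gives $\theta u''(\bar r) = -g(u(\bar r))$; since $u(\bar r) \in (0, \alpha)$, assumption \eqref{g3} yields $g(u(\bar r)) > 0$, so $\theta u''(\bar r) < 0$, contradicting $u''(\bar r) \ge 0$.

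For (c), assume by contradiction that $u > 0$ on $[0, R)$. By (b), $u' < 0$ on $(0, R)$, and Lemma~\ref{le:Lemma21+}(ii) gives $E_\lambda$ decreasing, whence $|u'| \le \sqrt{2G(\xi)/\lambda}$; boundedness of $u$ and $u'$ then yields $R = +\infty$ by standard extension. Hence $u(r) \to u_\infty \in [0, \xi)$. If $u_\infty > 0$: convergence of $E_\lambda$ combined with the fact that $u$ is monotone forces $u'(r) \to 0$, so the ODE gives $\theta u''(r) \to -g(u_\infty) < 0$, from which $u'(r) \to -\infty$, a contradiction. If $u_\infty = 0$: I would use \eqref{g4} to write $g(u(r)) = h(r)u(r)$ with $h(r) \to g'(0) > 0$; via the Liouville substitution $w(r) = r^{(N-1)/2}u(r)$ applied in the tail, where after a case analysis on the sign of $u''$ the equation reduces to a Bessel-type equation $w'' + p(r)w = 0$ with $\liminf_{r\to+\infty} p(r) > 0$, Sturm comparison with a constant-coefficient oscillatory equation $\tilde v'' + c\tilde v = 0$ (any $0 < c < g'(0)/\Lambda$) produces zeros of $w$, hence of $u$, contradicting $u > 0$.

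The main obstacle is the case $u_\infty = 0$: the piecewise character of $\theta$ and $\Theta$ (they depend on the signs of $u''$ and $u'$) means the tail analysis and Sturm comparison must either be preceded by an argument fixing $\theta = \lambda$ in the asymptotic regime (by showing $u'' < 0$ for large $r$) or suitably adapted to handle the switching between $\lambda$ and $\Lambda$. The hypothesis \eqref{g4}, ensuring $g'(0) > 0$, is essential here because it makes the linearization at $0$ an oscillatory equation, which is precisely what rules out a monotone decay of $u$ to zero.
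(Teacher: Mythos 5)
Your steps (a) and (b), and your treatment of the subcase $u_\infty>0$, are correct and essentially match the paper. The genuine gap is exactly where you locate it, namely the subcase $u_\infty=0$: your plan is to reduce, ``after a case analysis on the sign of $u''$'', to an honest linear equation $w''+p(r)w=0$ and then apply Sturm comparison. But no such case analysis is available: the coefficient $\theta$ may switch between $\lambda$ and $\Lambda$ arbitrarily often in the tail, and there is no a priori reason why $u''$ should eventually have a fixed sign (proving eventual concavity of a positive decreasing solution tending to $0$ is essentially as hard as the lemma itself). So, as written, the decisive step is missing, and your fallback (``suitably adapted to handle the switching'') is not specified.

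The idea that closes the gap --- and which in fact handles $u_\infty=0$ and $u_\infty>0$ simultaneously, making your case split unnecessary --- is to give up the exact equation and retain only a one-sided differential inequality that is insensitive to the switching. Since $u'<0$ and $g(u)>0$ as long as $0<u<\alpha$, for either value of $\theta\in\{\lambda,\Lambda\}$ one has
\[
u''=-\frac{(N-1)\lambda}{\theta r}\,u'-\frac{g(u)}{\theta}\le -\frac{N-1}{r}\,u'-\frac{g(u)}{\Lambda},
\]
because $\lambda/\theta\le 1$ multiplies the positive quantity $-u'$ and $1/\theta\ge 1/\Lambda$ multiplies the positive quantity $g(u)$. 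Then $v(r)=r^{(N-1)/2}u(r)$ satisfies $v''+\sigma_+(r)v\le 0$ with $\sigma_+(r)=\frac{g(u(r))}{\Lambda u(r)}-\frac{(N-1)(N-3)}{4r^2}\ge c_0>0$ for $r$ large; here \eqref{g4} enters, exactly as you anticipated, to bound $g(u)/u$ from below when $u\to 0$. No Sturm comparison is then needed: $v>0$ and $v''\le -c_0 v<0$ eventually, so $v'$ decreases to some limit $L$, and $L\ge 0$ since otherwise $v\to-\infty$; hence $v$ is eventually increasing, so $v\ge c_1>0$, so $v''\le -c_0c_1$ and $v'\to-\infty$, a contradiction. (If you prefer an oscillation-type argument, the Riccati quotient $\rho=v'/v$ satisfies $\rho'\le -c_0-\rho^2$ directly from the inequality, which forces a zero of $v$ in finite time.) This is the route the paper takes.
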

\begin{proof}
If $u$ remains decreasing and  concave, then it necessarily has to decrease to a first zero. 
\\
Assume, by contradiction that $u>0$ in $[0,R)$. This implies that $u'<0$ therein. Indeed, there cannot exist $\bar r\in (0,R)$ such that $u(\bar r)\in (0,\a) $, $u'(\bar r)=0$ and $u''(\bar r)\ge 0$ because, otherwise,
\[
\underbrace{\Lambda u''(\bar r)}_{\ge 0}+\underbrace{\frac{N-1}{\bar r}\lambda u'(\bar r)}_{=0}+\underbrace{g(u(\bar r))}_{>0}=0,
\]
which is impossible.
So, since $u(r)\in (0,\a)$ for any $r\in [0,R)$, then $R=+\infty$ and  there exists 
\[
\lim_{r\to +\infty}u(r)=u_\infty\in[0,\alpha).
\]
Therefore, $u$ satisfies
\begin{equation*}
u''=-\frac{N-1}{\theta r}\lambda u'-\frac{g(u)}{\t}
\le -\frac{N-1}{ r} u'-\frac{g(u)}{\Lambda}.
\end{equation*}
If we set $v(r) = r^\frac{N-1}{ 2} u(r)$,  we get the following
\begin{equation}\label{sigma+}
v''+\s _+(r)v\le  0 \qquad\hbox{ where }\
\s_+(r):= 
%\frac{ g(u(r))}{\Lambda u(r)}-
%{\color{red}\frac{(N -1)}{2 r^2}\left(\frac{(N-1)}{2 }-1\right)} {\color{blue}
\frac{ g(u(r))}{\Lambda u(r)}-\frac{(N -1)(N-3)}{4 r^2}.
\end{equation}
By \eqref{g4}, we infer that there exists $c_0>0$ such that $\s_+(r)\ge c_0$, definitively, and so $v''$ is definitively negative.
Being $v'$ definitively decreasing, there exists $L=\lim_{r\to +\infty}v'(r)<+\infty$. Observe that $L\ge 0$, because, otherwise, $\lim_{r\to +\infty}v(r)=-\infty$ contradicting the positivity of $v$.
Hence $v$ is definitively increasing and so there exists $c_1>0$ such that $v(r)\ge c_1$, definitively. This, together with \eqref{sigma+}, implies that $L=-\infty$ which clearly is not possible.
\end{proof}

\begin{lemma}\label{le:step2}
 $u'<0$ until $u$  attains a critical point (which is also a local minimizer) at some $r_2 > r_1$ with $-\alpha < u(r_2 ) < 0$.
\end{lemma}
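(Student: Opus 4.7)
My plan is to continue the trajectory past $r_1$ via the two energy functionals $E_\lambda$ and $E_\Lambda$ of Lemma \ref{le:Lemma21+}, exploiting the hypothesis \eqref{xi}. First, since $g(u(r_1))=g(0)=0$, the ODE at $r_1$ reduces to $\theta\,u''(r_1)=-\frac{N-1}{r_1}\lambda\,u'(r_1)>0$; hence $u''(r_1)>0$ (so $\theta=\Lambda$) and $u'<0$ in a right neighborhood of $r_1$. I therefore set
\[
r_2:=\inf\{r>r_1 : u'(r)=0\}\in(r_1,R],
\]
so that on $(r_1,r_2)$ one has $u'<0$, $u<0$, and $g(u)<0$.

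The central estimate is $E_\Lambda(r)\le E_\Lambda(r_1)<G(\alpha)$ on $[r_1,r_2)$. On $(0,r_1)$, $u'<0$ and $g(u)>0$, so by Lemma \ref{le:Lemma21+}(ii) the function $E_\lambda$ is decreasing; an inspection of the proof shows the decrease is in fact strict whenever $u'\ne 0$, so $E_\lambda(r_1)<E_\lambda(0)=G(\xi)$. Since $G(u(r_1))=0$, this gives
\[
E_\Lambda(r_1)=\frac{\Lambda}{\lambda}E_\lambda(r_1)<\frac{\Lambda G(\xi)}{\lambda}\le G(\alpha)
\]
by \eqref{xi}. On $(r_1,r_2)$, $u'<0$ and $g(u)<0$, so Lemma \ref{le:Lemma21+}(ii) makes $E_\Lambda$ decreasing. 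A first-crossing argument rules out $u=-\alpha$: at the first such $\bar r$ we would have $G(\alpha)=G(u(\bar r))\le E_\Lambda(\bar r)\le E_\Lambda(r_1)<G(\alpha)$, a contradiction. Hence $u(r)\in(-\alpha,0)$ on $[r_1,r_2)$.

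Finally I would rule out $r_2=+\infty$ by contradiction. If $u'<0$ on all of $(r_1,+\infty)$, then $u$ stays bounded in $(-\alpha,0)$, so $R=+\infty$ and $u(r)\to u_\infty\in[-\alpha,0)$; the monotone bounded $E_\Lambda$ has a limit $E_\Lambda^\infty$, hence $(u')^2\to 2(E_\Lambda^\infty-G(u_\infty))/\Lambda$, and boundedness of $u$ forces $u'\to 0$ and $E_\Lambda^\infty=G(u_\infty)$. If $u_\infty\in(-\alpha,0)$, then $g(u_\infty)<0$ and the ODE yields $u''(r)\to -g(u_\infty)/\Lambda>0$, incompatible with $u'\to 0$. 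If $u_\infty=-\alpha$, then $G(\alpha)=E_\Lambda^\infty\le E_\Lambda(r_1)<G(\alpha)$, again impossible. Thus $r_2<\infty$, and the ODE at $r_2$ gives $\theta u''(r_2)=-g(u(r_2))>0$, so $u''(r_2)>0$ and $r_2$ is a strict local minimum with $-\alpha<u(r_2)<0$. The main subtlety is preserving the strict bound $E_\Lambda(r_1)<G(\alpha)$ even when \eqref{xi} holds with equality; this relies on the strict decay of $E_\lambda$ on $(0,r_1)$ produced by the higher-dimensional dissipation term $\tfrac{N-1}{r}(u')^2$, a feature absent when $N=1$.
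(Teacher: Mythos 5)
Your proof is correct and follows essentially the same strategy as the paper: a three-way case analysis on $[r_1,+\infty)$ controlled by the monotonicity of $E_\lambda$ on $[0,r_1]$ and of $E_\Lambda$ past $r_1$ (Lemma \ref{le:Lemma21+}), with hypothesis \eqref{xi} ruling out the crossing of $-\alpha$ and the asymptote at $-\alpha$. Your packaging of the key inequality as $E_\Lambda(r_1)=\tfrac{\Lambda}{\lambda}E_\lambda(r_1)<G(\alpha)$, and your direct limit argument $\theta u''\to -g(u_\infty)>0$ in the asymptote case (where the paper extracts a sequence $s_n$ with $0\le u''(s_n)<1/n$), are only minor streamlinings of the same argument.
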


\begin{proof}
	Concerning the behavior of $u$ on $[r_1 ,+\infty)$, there are three possibilities:
	\begin{enumerate}[label=(\alph{*}), ref=\alph{*}]
		\item \label{Nplusa} $u'<0$ until $u$ attains $-\alpha$  at some $\bar r>r_1$;
		\item \label{Nplusb} $u'<0$ on $[r_1 , +\infty)$ and $u$ decreases to some value $u_\infty \in  [-\alpha, 0)$;
		\item \label{Nplusc} $u'<0$ until $u$ attains a critical point (which is a minimizer) at some $r_2 > r_1$ with $-\alpha < u(r_2 ) < 0$.
	\end{enumerate}
	Let us show that the cases (\ref{Nplusa}) and (\ref{Nplusb}) do not occur.
	\\
	Suppose that (\ref{Nplusa}) holds. So, there exists $\bar r>r_1$ such that $u(\bar r)=-\alpha$ and  $u'< 0$ in $[r_1,\bar r]$.
	\\
	By Lemma \ref{le:Lemma21+}, $E_\lambda$ is decreasing in $[0,r_1]$ and so $E_\lambda (0)>E_\lambda(r_1)$, which implies that
	\begin{equation}\label{+1}
	G(\xi)>\frac \lambda 2 (u'(r_1))^2.
	\end{equation} 
	Moreover, again by Lemma \ref{le:Lemma21+}, $E_\Lambda$ is decreasing in $[r_1,\bar r]$ and so we have that $E_\Lambda(r_1)>E_\Lambda(\bar r)$, namely
	\begin{equation}\label{+2}
	\frac \Lambda 2(u'(r_1))^2>
	%{\color{red}\frac \Lambda 2 (u'(\bar r))^2 +G(u(\bar r))=}
	\frac \Lambda 2 (u'(\bar r))^2 +G(\alpha).
	\end{equation} 
	Therefore, by \eqref{+1} and \eqref{+2}, we have
	\[
	G(\alpha)
	\le \frac \Lambda 2 (u'(\bar r))^2 +G(\alpha)
	<\frac \Lambda 2(u'(r_1))^2
	<\frac{\Lambda}{\lambda}G(\xi),
	\]
	but this is in contradiction with \eqref{xi} and, hence, the case (\ref{Nplusa}) is impossible. 
	\\
	Let us now suppose that (\ref{Nplusb}) holds. First of all, let us observe that, by Lemma \ref{le:Lemma21+}, $E_\Lambda$ is decreasing in $[r_1,+\infty)$.
	Therefore there exists $E_{\Lambda,\infty}=\lim_{r\to +\infty}E_\Lambda(r)$. 
Moreover $(u')^2$ admits finite limit as $r\to +\infty$ and so, since $u'<0$, then
\[
\lim_{r\to +\infty}u'(r)=0.
\]
Indeed, otherwise, since $u$ admits a  horizontal asymptote, there would exist $c>0$ and two positive increasing diverging sequences $\{s_n\}_n$ and $\{t_n\}_n$, such that $s_n\in (t_n,t_{n+1})$, $u'(s_n)\le -c$, for $n$ sufficiently large, and $u'(t_n)\to 0$, as $n\to +\infty$.
Therefore we would have
$$E_\Lambda(s_n)\ge \frac{\Lambda}{2}c^2+G(u_\infty),$$ for $n$ sufficiently large, while $$E_\Lambda(t_n)\to G(u_\infty)$$ as $n\to +\infty$, contradicting
the existence of the limit of $E_\Lambda$ at infinity.
\\
Let us prove, now, that for all $n\ge 1$, there exists $s_n>n$ such that $0\le u''(s_n)<1/n$.\\
Suppose by contradiction that there exist $c>0$ and $\bar r>0$ such that either $u''(r)<0$ or $u''(r)\ge c$, for any $r>\bar r$. The first case is impossible, since it would contradict the fact that $u'< 0$ and $u'$ goes to zero at infinity.
%\todo[inline]{Il primo caso \`e in contraddizione anche con la presenza dell'asintoto orizzontale, ma per me va bene anche cos\`i.}
Also the second case is impossible, because it would imply that $\lim_{r\to +\infty}u'(r)=+\infty$. 
\\
Hence we can argue that there exists a diverging sequence $\{s_n\}_n$ such that $u'(s_n)<0$ and $0\le u''(s_n)<1/n$. Therefore we have
\[
g(u_\infty)
=\lim_{n\to +\infty}g(u(s_n))
=-\lim_{n\to +\infty}\left(\Lambda u''(s_n)+\frac{(N-1)}{s_n}\lambda u'(s_n)\right)=0.
\] 
Since hypothesis \eqref{g3} implies that $g(s) = 0$ if and only if $|s| \in \{0,\a\}$,  we get a contradiction if $u_\infty \in  (-\alpha, 0)$.\\
Suppose, therefore, that $u_\infty=-\alpha$. Since, by Lemma \ref{le:Lemma21+}, $E_\Lambda$ is decreasing in $[r_1,+\infty)$, we have that, for large $n\in\mathbb{N}$, $E_\Lambda(r_1)>E_\Lambda(s_n)$, namely
	\begin{equation*}%\label{+n}
	\frac \Lambda 2(u'(r_1))^2>\frac \Lambda 2 (u'(s_n))^2 +G(u(s_n)).
	\end{equation*} 
	Therefore, passing to the limit on $n$ and using  \eqref{+1} we infer
	\[
	G(\alpha)
	\le \frac \Lambda 2(u'(r_1))^2
	<\frac{\Lambda}{\lambda}G(\xi),
	\]
	but this is in contradiction with \eqref{xi} and, hence, also the case (\ref{Nplusb}) is impossible. 
	\\
	Therefore, we can say that $u'<0$ until $u$ attains a critical point at some $r_2 > r_1$ with $-\a < u(r_2 ) < 0$. 
	\\
	Moreover, by \eqref{eqr}, \eqref{g2} and \eqref{g3}, we have
	\[
	\theta u''(r_2)=-g(u(r_2))>0.
	\]
	Hence, $r_2$ is a local minimizer. 
\end{proof}	
\begin{lemma}\label{le:step3}
$u$ oscillates and  $\|u\|_{L^\infty(\R_+)}<\a$.
\end{lemma}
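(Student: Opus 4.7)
The plan is to iterate the analysis of Lemmas \ref{le:step1}--\ref{le:step2} past $r_2$, and to repackage the energy information of Lemma \ref{le:Lemma21+} into a single monotone quantity that controls the amplitude globally.

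First I would show that past $r_2$ the solution reaches a new zero at some $r_3>r_2$, then a local maximum $r_4\in(r_3,+\infty)$ with $u(r_4)\in(0,\alpha)$, and so on by iteration. Near $r_2$ we have $u<0$, $u'>0$ and $u''(r_2)>0$. If, for contradiction, $u'>0$ and $u\le 0$ on all of $(r_2,+\infty)$, then $u\to u_\infty\in(u(r_2),0]$, and since $E_\Lambda$ decreases on intervals with $u'>0$ also $u'\to 0$. The case $u_\infty<0$ is excluded as in Lemma \ref{le:step2}: the equation forces $\theta u''\to -g(u_\infty)>0$, and integration gives $u'\to+\infty$. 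For $u_\infty=0$ I would run a Riccati-type argument on $w:=-u>0$, which satisfies $\theta w''+(N-1)\Lambda r^{-1}w'+g(w)=0$ with $\theta=\Lambda$ if $w''\le 0$ and $\theta=\lambda$ if $w''>0$; the larger radial coefficient $\Lambda$ requires the nonstandard substitution $V:=r^{\gamma}w$ with $\gamma:=(N-1)\Lambda/(2\lambda)$. A direct computation then yields
\[
V''+\widetilde{\sigma}(r)\,V\le 0,\qquad \widetilde{\sigma}(r):=\frac{g(w(r))}{\Lambda\,w(r)}-\frac{\gamma(\gamma-1)}{r^2},
\]
and \eqref{g4} gives $\widetilde{\sigma}(r)\to g'(0)/\Lambda>0$ as $r\to+\infty$, so the concavity argument of Lemma \ref{le:step1} applied to the positive function $V$ produces a contradiction. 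Hence $u(r_3)=0$ for some finite $r_3$. The existence of $r_4$ with $u(r_4)\in(0,\alpha)$ then follows by repeating the proof of Lemma \ref{le:step2}, using that $E_\Lambda$ is decreasing on $[r_2,r_4]$ (since $u'>0$ throughout) to get $G(u(r_4))\le G(|u(r_2)|)<G(\alpha)$, which excludes $u$ from reaching $\alpha$. Iterating the same scheme alternately at each extremum produces infinitely many zeros $r_1<r_3<r_5<\cdots\to+\infty$ and extrema $r_2<r_4<r_6<\cdots$ with $u(r_{2k})\in(-\alpha,0)\cup(0,\alpha)$, establishing the oscillation.

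For the bound $\|u\|_{L^\infty(\R_+)}<\alpha$ I would introduce the combined energy
\[
\mathcal{F}(r):=\begin{cases}E_\Lambda(r) & \text{if } u(r)\le 0,\\[2pt] (\Lambda/\lambda)\,E_\lambda(r) & \text{if } u(r)\ge 0,\end{cases}
\]
which is continuous at every zero of $u$, as there both expressions reduce to $\Lambda\,(u'(r))^2/2$. On every interval where $u<0$ one has $g(u)<0$, so Lemma \ref{le:Lemma21+} yields $E_\Lambda$ decreasing regardless of the sign of $u'$; symmetrically $E_\lambda$ is decreasing on every interval where $u>0$. Hence $\mathcal{F}$ is nonincreasing on $[0,+\infty)$, and strictly decreasing wherever $u'\ne 0$. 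At each extremum $r_{2k}$, $u'(r_{2k})=0$ gives $\mathcal{F}(r_{2k})=G(|u(r_{2k})|)$ if $u(r_{2k})<0$, and $\mathcal{F}(r_{2k})=(\Lambda/\lambda)\,G(|u(r_{2k})|)$ if $u(r_{2k})>0$. Since $\mathcal{F}(r_{2k})<\mathcal{F}(0)=(\Lambda/\lambda)\,G(\xi)\le G(\alpha)$ by \eqref{xi}, in either case $G(|u(r_{2k})|)<G(\alpha)$, hence $|u(r_{2k})|<\alpha$; combined with $\xi<\alpha$ and the monotonicity of $u$ between consecutive critical points this yields the claim.

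The main technical obstacle is the Riccati step in the case $u_\infty=0$: the standard substitution $v=r^{(N-1)/2}w$ from Lemma \ref{le:step1} leaves a $w'$-term with the wrong sign, and only the nonstandard exponent $(N-1)\Lambda/(2\lambda)$, dictated by the larger radial coefficient $\Lambda$ in the equation for $w=-u$, gives a clean Sturm-type inequality whose coefficient is bounded below by a positive constant at infinity thanks to \eqref{g4}.
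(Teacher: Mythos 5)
Your argument is correct and follows essentially the same route as the paper: the iteration producing the zeros and extrema, including the Sturm-type comparison with the exponent $\gamma=(N-1)\Lambda/(2\lambda)$ in the substitution $V=r^{\gamma}w$, is exactly the paper's, and your combined energy $\mathcal{F}$ is just a compact repackaging of the paper's chain of inequalities obtained by applying Lemma \ref{le:Lemma21+} successively on the monotonicity intervals of $u$ (the two coincide term by term at $0,r_1,r_3,r_4,\dots$, after multiplying by $\Lambda/\lambda$ where appropriate). The only other cosmetic difference is that you dispose of the horizontal-asymptote case $u_\infty<0$ directly from the equation rather than folding it into the Sturm argument.
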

\begin{proof}
	Let us first show that there exists an increasing sequence $\{r_n\}_n$ such that
	$r_0=0$, each $r_{4k}$ is local maximizer, each $r_{2+4k}$
	is local minimizer, and each $r_{1+2k}$ is zero of $u$, with $k\in \N$. 
	\\
If $u$ remains increasing and  convex, then it necessarily has to increase to a second zero $r_3 > r_2$.
	\\
	Assume, by contradiction that $u<0$ in $[r_2,R)$. Then, arguing as in the proof of Lemma \ref{le:step1}, there cannot exist $\bar r\in [r_2,R)$ such that $u(\bar r)\in (-\a ,0) $, $u'(\bar r)=0$ and $u''(\bar r)\le 0$ and so $u'>0$ therein. Hence $R=+\infty$ and  there exists 
	\[
	\lim_{r\to +\infty}u(r)=u_\infty\in(-\a ,0].
	\]
	Therefore, $u$ satisfies
	\begin{equation*}
	u''=-\frac{N-1}{\theta r}\Lambda u'-\frac{g(u)}{\t}
	\ge -\frac{N-1}{\lambda r}\Lambda u'-\frac{g(u)}{\Lambda}.
	\end{equation*}
	If we set $v(r) = r^\frac{(N-1)\Lambda}{ 2\lambda} u(r)$,  we get 
	\begin{equation*}\label{sigma-}
	v''+\s_- (r)v\ge  0 \qquad\hbox{ where }\
	\s_-(r):= \frac{ g(u(r))}{\Lambda u(r)}-\frac{(N -1)\Lambda[(N-1)\Lambda - 2\l]}{4\lambda^2 r^2}.
	\end{equation*}
	Then we reach a contradiction as in the proof of Lemma \ref{le:step1}, obtaining a second zero $r_3 > r_2$.
	\\
%	{\color{red}Indeed, by \eqref{g4}, we infer that there exists $c_0>0$ such that $\s_-(r)\ge c_0$, definitively, and so $v''$ is definitively positive.
%		Being $v'$ definitively increasing, there exists $L=\lim_{r\to +\infty}v'(r)>-\infty$. Observe that $L\le 0$, because, otherwise, $\lim_{r\to +\infty}v(r)=+\infty$ contradicting the negativity of $v$. Hence $v$ is definitively decreasing and so there exists $c_1>0$ such that $v(r)\le -c_1$, definitively. This, together with \eqref{sigma-}, implies that $L=+\infty$ which clearly is not possible.\\}	
%	{\color{red}Arguing as in Step 1, by \eqref{g2} and \eqref{g3}, we can infer that $u'>0$ until $u$ remains negative and we get a second zero $r_3 > r_2$.\\}
	Concerning the
	behaviour of $u$ on $[r_3 ,+\infty)$, there are again three possibilities:
	\begin{enumerate}[label=(\alph{*}'), ref=\alph{*}']
		\item \label{Nplusap} $u'>0$ until $u$  attains $-u(r_2)$  at some $\bar r>r_3$;
		\item \label{Nplusbp} $u'>0$ on $[r_3 , +\infty)$ and $u$ increases to some value $u_\infty \in (0,-u(r_2)]$;
		\item \label{Npluscp} $u'>0$ until $u$ attains a critical point (which is a maximizer) at some $r_4 > r_3$ with $0 < u(r_4 ) < -u(r_2)$.
	\end{enumerate}
Let us show that the case (\ref{Nplusap}) does not occur.\\
Indeed, if there existed $\bar r > r_3$ such that $u(\bar r) = -u(r_2)$, then we would deduce that
\[
E_\Lambda(\bar r) \ge G(u(\bar r)) = G(u(r_2))= E_\Lambda(r_2),
\]
which is in contradiction with the decreasing monotonicity of $E_\Lambda$ in the  interval $[r_2,\bar r]$ (see Lemma \ref{le:Lemma21+}).
Hence, the case (\ref{Nplusap}) is impossible.\\
	Modifying slightly the arguments of (\ref{Nplusb}) in the proof of Lemma \ref{le:step2}, using again the fact that, by Lemma \ref{le:Lemma21+},  $E_\Lambda$ is  decreasing whenever $u'>0$, we can prove that also the case (\ref{Nplusbp}) cannot occur.\\
	Therefore  $u'>0$ until $u$ attains a critical point, which is actually a local maximizer, at some $r_4 > r_3$ with $0 < u(r_4 ) < -u(r_2)$.\\
	Using Lemma \ref{le:Lemma21+} several times, we now give a better estimate of the value $u(r_4)$.
	\\
	We start observing that $E_\lambda$ is decreasing in $[0,r_1]$ and so $E_\lambda (0)>E_\lambda(r_1)$, which means that
	\begin{equation}\label{a'1+}
	G(\xi)>\frac \lambda 2 (u'(r_1))^2.
	\end{equation} 
	Since $E_\Lambda$ is decreasing in $[r_1,r_3]$ (because $u'<0$ and $g(u)<0$ in $[r_1,r_2)$ and $u'>0$ in $(r_2,r_3)$), we have that $E_\Lambda(r_1)>E_\Lambda(r_3)$, namely
	\begin{equation}\label{a'2+}
	(u'(r_1))^2>(u'(r_3))^2.
	\end{equation} 
	Moreover  $E_\lambda$ is decreasing in $[r_3, r_4]$ and so $E_\lambda (r_3)>E_\lambda(r_4)$, namely
	\begin{equation}\label{a'3+}
	\frac \lambda 2 (u'(r_3))^2>G(u(r_4)).
	\end{equation} 
	Therefore, putting together \eqref{a'1+}, \eqref{a'2+} and \eqref{a'3+}, we have
	\[
	G(\xi)>\frac \lambda 2(u'(r_1))^2
	>\frac \lambda 2(u'(r_3))^2
	>G(u(r_4)),
	\]
	and so $0<u(r_4)<\xi$.
	\\
	We can now repeat the arguments used before proving the existence of a zero
	$r_5 > r_4$, a local minimizer $r_6>r_5$ and so on, such that %$\xi=u(r_0)>u(r_4)>u(r_8)>\ldots$.\todo{Decreasing sequence of local maximizers.}
	the sequence of local maxima  $\{u(r_{4k})\}_k$ is decreasing and $\xi=u(r_0)$.
\\
Again by means of Lemma \ref{le:Lemma21+}, we can also say something on the minimizers.\\
Indeed, since $E_\Lambda$ is decreasing in $[r_2,r_3]$, we have that $E_\Lambda (r_2)>E_\Lambda(r_3)$,  and so 
\begin{equation}\label{a'4+}
G(u(r_2))>\frac \Lambda 2 (u'(r_3))^2.
\end{equation} 
Being $E_\lambda$ decreasing in $[r_3,r_5]$,
%{\color{red}(because $u'>0$ and $g(u)>0$ in $(r_3,r_4)$ and $u'<0$ in $(r_4,r_5]$)}, 
we have that $E_\lambda(r_3)>E_\lambda(r_5)$, namely
\begin{equation}\label{a'5+}
(u'(r_3))^2>(u'(r_5))^2.
\end{equation} 
Besides,  $E_\Lambda$ is decreasing in $[r_5, r_6]$ and so $E_\Lambda (r_5)>E_\Lambda(r_6)$, namely
\begin{equation}\label{a'6+}
\frac \Lambda 2 (u'(r_5))^2>G(u(r_6)).
\end{equation} 
Therefore, putting together \eqref{a'4+}, \eqref{a'5+} and \eqref{a'6+}, we have
\[
	G(u(r_2))
%	{\color{red}>\frac \lambda 2(u'(r_3))^2	>\frac \lambda 2(u'(r_5))^2}
	>G(u(r_6)).
	\]
Hence, by \eqref{g2} and \eqref{g3}, we conclude that $u(r_2)<u(r_6)$. Repeating these arguments, we have also that  the sequence of local minima  $\{u(r_{2+4k})\}_k$ is increasing.
%$u(r_2)<u(r_6)<u(r_{10})<\ldots$.\todo{Increasing sequence of local minimizers.}
\\
We conclude these estimates, observing that, since $E_\L$ is decreasing whenever $u'>0$, we deduce that $-u(r_{2+4k})>u(r_{4+4k})$, for any $k\in \N$.
	\\
	Notice, at last, that this reasoning also shows that there are no further zeros or critical points. Moreover we conclude that $\|u\|_{L^\infty(\R_+)}=\max\{\xi,-u(r_2)\}<\alpha$.
\end{proof}

\begin{lemma}\label{lemma0}
	There exists $c>0$ such that $|u'(r)|+|u''(r)|\le c$, for all $r\in \R_+$. \\Moreover, if 
\begin{equation}\label{uto0}
\lim_{r\to +\infty} u(r)=0,
\end{equation}
then we have also that
\begin{equation}\label{u'u''to0}
\lim_{r\to +\infty} |u'(r)|+|u''(r)|=0.
\end{equation}
\end{lemma}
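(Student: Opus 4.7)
The approach combines the energy monotonicity of Lemma \ref{le:Lemma21+} with the oscillation structure established in Lemmas \ref{le:step1}--\ref{le:step3}. Recall that $u$ admits an increasing sequence of critical points $0 = r_0 < r_1 < r_2 < \ldots \to +\infty$ with zeros $u(r_{1+2k}) = 0$, local maxima $u(r_{4k})$ (decreasing in $k$, starting at $\xi$), and local minima $u(r_{2+4k})$ (increasing in $k$, starting at $u(r_2) \in (-\alpha,0)$). In particular $|u(r)| < \alpha$ throughout, hence $0 \le G(u(r)) \le M$ uniformly, for some constant $M > 0$.

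\textbf{Step 1: uniform bound on $|u'|$.} On each segment between consecutive critical points, Lemma \ref{le:Lemma21+} supplies a monotone energy. On $[r_{4k}, r_{1+4k}]$ ($u>0$, $u'<0$, $g(u)>0$), $E_\lambda$ is decreasing, so $\frac{\lambda}{2}(u'(r))^2 \le E_\lambda(r_{4k}) = G(u(r_{4k})) \le M$. On $[r_{1+4k}, r_{2+4k}]$ ($u<0$, $u'<0$, $g(u)<0$), $E_\Lambda$ decreases, giving $\frac{\Lambda}{2}(u'(r))^2 \le E_\Lambda(r_{1+4k}) = \frac{\Lambda}{2}(u'(r_{1+4k}))^2 \le \frac{\Lambda}{\lambda}M$. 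On the segments where $u' > 0$, both energies are decreasing, so analogous bounds hold starting from $r_{2+4k}$ or $r_{3+4k}$. This yields a constant $c_1 > 0$ with $|u'(r)| \le c_1$ for every $r \in \R_+$.

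\textbf{Step 2: uniform bound on $|u''|$.} Since $u \in C^2([0,+\infty))$, $u''$ is automatically bounded on $[0,1]$. For $r \ge 1$, rearranging \eqref{eqr} and using $\theta \ge \lambda$, $\Theta \le \Lambda$,
\[
|u''(r)| \le \frac{1}{\lambda}\left(\frac{N-1}{r}\Lambda|u'(r)| + |g(u(r))|\right) \le \frac{1}{\lambda}\left((N-1)\Lambda c_1 + \max_{|t|\le\alpha}|g(t)|\right),
\]
which is uniformly bounded. Combining the two ranges yields the first claim.

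\textbf{Step 3: decay under \eqref{uto0}.} Assume $u(r) \to 0$. Since $r_k \to +\infty$, we have $u(r_k) \to 0$ and hence $G(u(r_k)) \to 0$. Replaying the estimates of Step 1 segment by segment, $(u'(r))^2$ on each segment is bounded by a fixed multiple of $G(u(r_k))$ for the preceding critical point $r_k$, so $u'(r) \to 0$. Finally, from the ODE,
\[
|u''(r)| \le \frac{1}{\lambda}\left(\frac{N-1}{r}\Lambda|u'(r)| + |g(u(r))|\right) \longrightarrow 0
\]
as $r \to +\infty$, since $1/r \to 0$, $u'(r) \to 0$, and $g(u(r)) \to g(0) = 0$ by continuity of $g$.

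The principal subtlety lies in Step 3: one must control $|u'(r)|$ on the full axis, not just along critical points. This is exactly what the chain of segmentwise energy inequalities provides, because on each piece $(u'(r))^2$ is locked to a multiple of $G$ evaluated at an adjacent critical point, and those $G$-values vanish as $r_k \to +\infty$.
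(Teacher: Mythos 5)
Your proposal is correct and follows essentially the same route as the paper: segmentwise energy monotonicity from Lemma \ref{le:Lemma21+} to bound $(u'(r))^2$ by $G$ evaluated at the adjacent zeros/critical points (hence uniformly, and by quantities tending to $0$ when $u(r)\to 0$), and then the ODE \eqref{eqr} to control $u''$; the only differences are cosmetic (you split $[r_{1+4k},r_{4+4k}]$ into more pieces and use $M=G(\alpha)$ where the paper uses $G(\xi)$ and the monotonicity of $E_\Lambda$ over the whole block).
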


\begin{proof}
	Fix $k\in \N$. 
	Since, by Lemma \ref{le:Lemma21+}, $E_\l$ is decreasing in $[r_{4k},r_{1+4k}]$, then 
	\begin{equation}\label{step1}
	\frac{\l}2 (u'(r))^2
	\le E_\l(r) \le E_\l(r_{4k})\le G(\xi), \qquad \hbox{ for all }r\in[r_{4k},r_{1+4k}],
	\end{equation}
	while, being  $E_\L$ decreasing in $[r_{1+4k},r_{4+4k}]$, by \eqref{step1}, we have 
	\begin{equation}\label{step2}
	\frac{\L}2 (u'(r))^2
	\le E_\L(r) \le E_\L(r_{1+4k})= \frac{\L}2 (u'(r_{1+4k}))^2 \le\frac{\L}{\l}G(\xi),  \ \hbox{ for all }r\in[r_{1+4k},r_{4+4k}].
	\end{equation}
	This implies that $|u'|$ is bounded and so we deduce the boundedness of $|u''|$, since $u$ is a solution of \eqref{eqr}.
\\
Finally, supposing that \eqref{uto0} holds, by \eqref{step1} and \eqref{step2}, one can deduce that
\[
(u'(r))^2
\le \frac 2\l G(u(r_{4k})), \qquad \hbox{ for all }r\in[r_{4k},r_{4+4k}],
\]
from which $\lim_{r\to +\infty}u'(r)=0$ 
and, since $u$ solves \eqref{eqr}, we deduce also \eqref{u'u''to0}.
%\todo[inline]{Scrivo un po' di conti, poi decidiamo cosa lasciare. Io qualche dettaglio lo metterei. Propongo di scrivere questa parte nel Lemma \ref{lemma0}.\\
%Let $\varepsilon>0$. We know that there exists $\delta>0$ such that for every $r>\delta$ we have that $G(u(r))<\varepsilon$. Let $\bar{k}\in\N$ such that $r_{4\bar{k}}>\delta$. If $r\ge r_{4\bar{k}}$, if $k\ge\bar{k}$ such that $r\in[r_{4k},r_{4k+4}]$, using Lemma \ref{le:Lemma21+} as in Lemma \ref{lemma0} we have
%\[
%\frac{\lambda}{2}(u'(r_{4k+1}))^2
%\le \frac{\lambda}{2}(u'(r))^2 + G(u(r))
%\le G(u(r_{4k}))
%\]
%so that
%\[
%(u'(r_{4k+1}))^2
%< \frac{2}{\lambda}\varepsilon,
%\qquad
%(u'(r))^2
%\le \frac{2}{\lambda} (G(u(r_{4k}))-G(u(r)))
%<\frac{4}{\lambda} \varepsilon
%\]
%and
%\[
%\frac{\Lambda}{2}(u'(r))^2 + G(u(r))
%\le \frac{\Lambda}{2}(u'(r_{4k+1}))^2
%<\frac{\Lambda}{\lambda}\varepsilon
%\]
%so that
%\[
%(u'(r))^2
%<\frac{2}{\Lambda} \left(\frac{\Lambda}{\lambda}+1\right)\varepsilon.
%\]}
\end{proof}

\begin{lemma}\label{lemma1}
	The map $E_\L$ is decreasing in $
\displaystyle \I:=\bigcup_{k=0}^{+\infty}[r_{1+4k},r_{2+4k}].
$
\end{lemma}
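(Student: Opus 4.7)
The statement reduces to showing that on every connected component $[r_{1+4k}, r_{2+4k}]$ of $\I$, the function $E_\Lambda$ is decreasing, and this is a direct application of the second item of Lemma \ref{le:Lemma21+}. The plan is to verify that in each such interval both the sign conditions $u' < 0$ and $g(u) < 0$ hold simultaneously, so that Lemma \ref{le:Lemma21+}(ii) yields the monotonicity of $E_\Lambda$ on the fly.

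\textbf{Carrying it out.} Fix $k \in \mathbb{N}$. By the construction in the proof of Lemma \ref{le:step3} (which extends the analysis of Lemma \ref{le:step2} to every index $k$), the point $r_{1+4k}$ is a zero of $u$, the point $r_{2+4k}$ is the first critical point of $u$ after $r_{1+4k}$, and moreover it is a local minimizer with $u(r_{2+4k}) \in (-\alpha, 0)$. Consequently $u$ is strictly decreasing on $[r_{1+4k}, r_{2+4k}]$, that is, $u'(r) < 0$ for all $r$ in the interior of this interval, and $u(r) \in (-\alpha, 0)$ there as well. By assumption \eqref{g3}, $g$ is positive on $(0,\alpha)$; by assumption \eqref{g2}, $g$ is odd; hence $g$ is negative on $(-\alpha, 0)$. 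Therefore $g(u(r)) < 0$ in the interior of $[r_{1+4k}, r_{2+4k}]$.

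With $u' < 0$ and $g(u) < 0$ verified on the open interval $(r_{1+4k}, r_{2+4k})$, Lemma \ref{le:Lemma21+}(ii) applies and gives that $E_\Lambda$ is decreasing on $(r_{1+4k}, r_{2+4k})$. Since $u \in C^2([0, R))$, $E_\Lambda$ is continuous, and therefore the monotonicity extends to the closed interval $[r_{1+4k}, r_{2+4k}]$. As $k$ was arbitrary, $E_\Lambda$ is decreasing on every connected component of $\I$, which is the content of the lemma.

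\textbf{Main obstacle.} There is essentially no technical obstacle; the whole lemma is a bookkeeping exercise, and the only point that requires care is the correct identification of the sign of $u$ (and hence of $g(u)$) on each component $[r_{1+4k}, r_{2+4k}]$. This sign information, however, is already encoded in the description of the zeros and minimizers produced in Lemmas \ref{le:step2}--\ref{le:step3}, so once those are invoked, the conclusion is immediate from Lemma \ref{le:Lemma21+}(ii).
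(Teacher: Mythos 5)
There is a genuine gap: you have proved only the component-wise statement, which is not the actual content of the lemma. The paper's own proof opens by noting that the monotonicity of $E_\L$ on each single interval $[r_{1+4k},r_{2+4k}]$ is \emph{already known} from Lemma \ref{le:Lemma21+}(ii) (exactly the sign check you carry out), and that the lemma is proved only once one shows the cross-component inequality
\[
E_\L(r_{5+4k}) < E_\L(r_{2+4k}) \qquad\hbox{for all }k\in \N,
\]
i.e.\ that the value of $E_\L$ at the left endpoint of each component of $\I$ lies below its value at the right endpoint of the previous component. This global monotonicity on the union $\I$ is what is used later: in Claim~1 of the localization proposition one needs $\frac{\L}{2}(u'(r_{1+4k}))^2 = E_\L(r_{1+4k}) \ge \inf_{\I}E_\L = \lim_{k} E_\L(r_{2+4k})$, which is false in general if $E_\L$ is only known to decrease on each component separately. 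Your proof never compares values across different components, so it establishes only the part of the statement that was already contained in Lemma \ref{le:Lemma21+}.

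The missing step is not entirely automatic, because $E_\L$ need \emph{not} be decreasing on all of $[r_{2+4k},r_{5+4k}]$: on $[r_{4+4k},r_{5+4k}]$ one has $u'<0$ and $g(u)>0$, where Lemma \ref{le:Lemma21+} only controls $E_\l$. The paper's argument circumvents this as follows: on $[r_{2+4k},r_{3+4k}]$ one has $u'>0$, so $E_\L(r_{3+4k})<E_\L(r_{2+4k})$; on $[r_{3+4k},r_{5+4k}]$ the map $E_\l$ is decreasing (using $u'>0$ on $[r_{3+4k},r_{4+4k}]$ and $u'<0$, $g(u)>0$ on $[r_{4+4k},r_{5+4k}]$), and since $u$ vanishes at both $r_{3+4k}$ and $r_{5+4k}$ this yields $(u'(r_{5+4k}))^2<(u'(r_{3+4k}))^2$, which transfers to $E_\L(r_{5+4k})<E_\L(r_{3+4k})$ because at a zero of $u$ the two energies differ only by the constant factor in front of $(u')^2$. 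You would need to add this chain of estimates to complete the proof.
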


\begin{proof}
	By Lemma \ref{le:Lemma21+}, we already know that $E_\L$ is decreasing in $[r_{1+4k},r_{2+4k}]$, for any fixed $k\in \N$. The proof is concluded if we show that
	\begin{equation*}%\label{claim1}
	E_\L(r_{5+4k}) < E_\L(r_{2+4k}), \qquad\hbox{for all }k\in \N.
	\end{equation*}
	For simplicity we prove it just for $k=0$.\\
	Being $E_\L$ decreasing in $[r_{2},r_{3}]$, we have $E_\L(r_3)<  E_\L(r_2)$.
	%{\color{red}, namely
	%\[
	%\frac{\L}2 (u'(r_3))^2 \le G(u(r_2)).
	%\] }
	While, since $E_\l$ is decreasing in $[r_3,r_5]$, we deduce that  $E_\l(r_5)< E_\l(r_3)$, namely
	\[
	(u'(r_5))^2< (u'(r_3))^2,
	\]
	so that also $E_\L(r_5)< E_\L(r_3)$ and we can conclude.
\end{proof}

In order to conclude the proof of  (\ref{n>1+iii}) of Theorem \ref{n>1+}, we need only to prove that $u$ is localized and we will do this in the next proposition. The arguments are inspired by \cite{GZ,MMP} but, being $E_\L$ decreasing only in $\I$ and not on the whole $\R$  and due to the particular nature of the Pucci's extremal operators, we have to use, partially, a different approach. Even if some details could be omitted, referring to \cite{GZ,MMP}, we present the entire proof for the sake of completeness.

\begin{proposition}
$u $ is localized.
\end{proposition}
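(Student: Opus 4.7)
The plan is to show that the monotone sequences of local extrema $M_k := u(r_{4k})$ and $m_k := u(r_{2+4k})$ obtained in Lemma \ref{le:step3} both converge to $0$. Granted this, the monotonicity of $u$ on every half-period gives $\sup_{r\ge r_{4k}}|u(r)|=\max\{M_k,-m_k\}\to 0$, so $u(r)\to 0$ at infinity, and Lemma \ref{lemma0} then promotes this to $u'(r),u''(r)\to 0$, which is exactly localization.

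Let $M_k\searrow M\ge 0$ and $m_k\nearrow m\le 0$. The central step is an energy-decay estimate on the intervals $[r_{1+4k},r_{2+4k}]\subset\I$. On such an interval $u<0$, $u'<0$ and $g(u)<0$, and reading \eqref{eqr} as $\theta u''=-g(u)-\frac{N-1}{r}\Theta u'$ forces $u''>0$ everywhere there (otherwise $\theta u''=0$ would give $g(u)>0$, impossible). Hence $\theta=\Lambda$ and $\Theta=\lambda$, and multiplying $\Lambda u''+\frac{N-1}{r}\lambda u'+g(u)=0$ by $u'$ and integrating produces the identity
\begin{equation*}
E_\Lambda(r_{1+4k})-E_\Lambda(r_{2+4k})=(N-1)\lambda\int_{r_{1+4k}}^{r_{2+4k}}\frac{(u'(s))^2}{s}\,ds\ge\frac{(N-1)\lambda\,m_k^2}{r_{2+4k}\,(r_{2+4k}-r_{1+4k})},
\end{equation*}
where the lower bound uses Cauchy--Schwarz on $|m_k|=\int_{r_{1+4k}}^{r_{2+4k}}|u'(s)|\,ds$ together with $1/s\ge 1/r_{2+4k}$ on the integration range. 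On the other hand, Lemma \ref{lemma1} telescopes the differences and yields $\sum_k\bigl[E_\Lambda(r_{1+4k})-E_\Lambda(r_{2+4k})\bigr]\le E_\Lambda(r_1)<+\infty$.

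To turn this inequality against the assumption $m<0$, I would carry out a scaling-limit argument based on the uniform $C^2$ bounds of Lemma \ref{lemma0}. The translates $v_k(s):=u(r_{4k}+s)$ are precompact in $C^1_{\mathrm{loc}}([0,+\infty))$, and since the damping coefficient $\frac{N-1}{r_{4k}+s}$ tends to $0$, every accumulation point $v$ solves the one-dimensional Cauchy problem \eqref{CP1+} with $v(0)=M$, $v'(0)=0$. Because $\{M_k\}$ is strictly decreasing, $\Lambda G(M)<\Lambda G(\xi)\le\lambda G(\alpha)$, so Theorem \ref{n=1}\eqref{iii1+}(a) forces $v$ to be oscillating and periodic with finite period $\tau(M)>0$ and strictly negative minimum $m(M)$ if $M>0$, or $v\equiv 0$ if $M=0$. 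Continuous dependence on initial data then yields $r_{4(k+1)}-r_{4k}\to\tau(M)$, $r_{2+4k}-r_{1+4k}\to\tau_1(M)<+\infty$ and $m_k\to m(M)$; in particular $m=0\iff M=0$. Under the contradiction hypothesis $m<0$ we therefore have $M,-m>0$ and finite asymptotic half-periods, so $r_{2+4k}\le C_1 k$, $r_{2+4k}-r_{1+4k}\le C_2$ and $m_k^2\ge m^2/4$ for all large $k$. The displayed energy identity then gives $E_\Lambda(r_{1+4k})-E_\Lambda(r_{2+4k})\ge C/k$, and summing contradicts the bound from Lemma \ref{lemma1}. Hence $m=0$, so also $M=0$.

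The main obstacle will be making this scaling limit fully rigorous in the Pucci setting. Because $\theta$ and $\Theta$ jump according to the signs of $v''$ and $v'$, one must ensure that the critical points and inflection points of $v_k$ converge to those of $v$, so that Theorem \ref{n=1} may legitimately be applied to the limit; this in turn is what guarantees that $\tau(M)$, $\tau_1(M)$ and $m(M)$ are positive and finite, which is exactly what makes the $1/k$ estimate above effective. Once this is in place, the contradiction closes the proof and the derivative decay follows at once from Lemma \ref{lemma0}.
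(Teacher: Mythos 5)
Your proposal reproduces the paper's central mechanism exactly: reduce localization to $u(r_{2+4k})\to 0$ via Lemma \ref{lemma0} and the ordering of the extrema, then contradict the boundedness of the telescoped energy drop from Lemma \ref{lemma1} by showing that on each interval $[r_{1+4k},r_{2+4k}]\subset\I$ the exact dissipation identity $E_\Lambda'=-\lambda\frac{N-1}{r}(u')^2$ produces a loss of order $1/k$, so that the harmonic series diverges. Your Cauchy--Schwarz lower bound for $\int (u')^2/s\,ds$ is a clean variant of the paper's Claims 1, 2 and 8 (the paper instead bounds $|u'|$ below on a fixed $\delta$-neighbourhood of each zero $r_{1+4k}$ and integrates only there), and note that $m_k^2\ge m^2$ is immediate from the monotonicity of $\{m_k\}$, so no limiting argument is needed for that piece.

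The one place where you genuinely diverge from the paper --- and where your proof is not yet complete --- is the bounded-period estimate $r_{2+4k}-r_{1+4k}\le C_2$, $r_{2+4k}\le C_1k$, which you propose to extract from a scaling limit onto the one-dimensional problem and which you yourself flag as the main obstacle. This route can be closed: the radial equation \eqref{eqr} can be solved for $u''$ in the form $u''=h\bigl(-g(u)-\tfrac{N-1}{r}m(u')\bigr)$, where $h(t)=t/\Lambda$ for $t\ge 0$, $h(t)=t/\lambda$ for $t<0$, and $m(p)=\Lambda p$ for $p\ge0$, $m(p)=\lambda p$ for $p<0$; this right-hand side is jointly continuous and locally Lipschitz in $(u,u')$, so the uniform bounds of Lemma \ref{lemma0} give $C^2_{\mathrm{loc}}$ convergence of the translates to a solution of $\M^+(v'')+g(v)=0$, $v(0)=M$, $v'(0)=0$, and the transversality of the zeros and nondegeneracy of the critical points of the periodic limit (Theorem \ref{n=1}, case (iii)(a), which applies since $0<M<\xi$ gives $\Lambda G(M)<\lambda G(\alpha)$ under \eqref{xi}) transfer the finite period back to $u_\xi$ for large $k$. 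The paper avoids this machinery entirely: its Claims 3--6 show via Lemma \ref{le:Lemma21+} that $|u|$ stays between two positive constants $c_1\le|u|\le c_2<\alpha$ outside fixed $\delta$-neighbourhoods of the zeros, whence $|g(u)|$ and therefore $|u''|$ are bounded below on any long monotonicity interval (the damping term being $O(1/r)$ by Lemma \ref{lemma0}), which contradicts the uniform bound on $|u'|$ if a half-period were unbounded. Either carry out the compactness argument in the form sketched above or substitute this direct estimate; as written, the step you defer is precisely the missing link between your (correct) energy inequality and the final contradiction.
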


\begin{proof}
By Lemma \ref{lemma0}, we need only to prove \eqref{uto0}.
\\
Since we know that $0<u(r_{4+4k})<|u(r_{2+4k})|$, for all $k\in \N$, then, in order to prove \eqref{uto0}, we need only to show that
	\begin{equation*}%\label{tende0}
	\lim_{k\to +\infty} u(r_{2+4k})=0.
	\end{equation*}
	We argue by contradiction and suppose that there exists $\bar{c}>0$ such that
	\begin{equation}\label{assurdo}
	-\alpha <u(r_{2+4k})\le -\bar{c}, \qquad\hbox{for all }k\in \N.
	\end{equation}
	We will show that this implies that $\inf_{\I} E_\L=-\infty $, while we know that $E_\L\ge 0$ in $\I$.\\
Since the proof is quite long and articulated, we divide it into intermediate steps.
\\ 
	{\it Claim 1: there exists $c>0$ such that $u'(r_{1+4k})\le - c$, for all $k\in \N$.}
	\\
	Indeed, by Lemma \ref{lemma1} and \eqref{assurdo}, for any $k\ge 0$ we have that
	\[
	\frac{\L}{2}(u'(r_{1+4k}))^2
	\ge \inf_{\I}E_\L
	=\lim_{k\to +\infty} E_\L(r_{2+4k})
	\ge G(\bar{c}).
	\]
	{\it Claim 2:  there exist $\d_1 >0$ and $c>0$ such that $u'(r)\le -c$, for all $k\in \N$ and $r\in [r_{1+4k}-\d_1 ,r_{1+4k}+\d_1]$.}
	\\
	We prove the claim in $[r_{1+4k},r_{1+4k}+\d_1]$ and with similar arguments we can extend the result in the whole interval.
	\\
	Suppose by contradiction that our claim does not hold, then there exists a diverging sequence $\{k_n\}_n$ such that, for any $n\ge 1$, there exists $s_n\in [r_{1+4k_n},r_{1+4k_n}+\frac 1n]$ with $\lim_{n\to +\infty} u'(s_n)=0$. 
	But this is in contradiction with Claim 1, due to boundedness of $|u''|$ stated in Lemma \ref{lemma0}.\\
	{\it Claim 3: there exists $c>0$ such that $u(r_{4k})\ge c$, for all $k\in \N$.}
	\\
	By Claim 2, for any $k\ge 0$ we have
	\[
	u(r_{4k})\ge u(r_{1+4k}-\d_1)
	= u(r_{1+4k}-\d_1) -u(r_{1+4k})
	=-\int_{r_{1+4k}-\d_1}^{r_{1+4k}}u'(r)\, dr
	\ge c\d_1 .
	\] 
	{\it Claim 4: there exists $c>0$ such that $u'(r_{3+4k})\ge c$, for all $k\in \N$.}
	\\
	Fix $k\ge 0$. Since $u$ is increasing in $[r_{3+4k},r_{4+4k}]$, then, by Lemma \ref{le:Lemma21+}, $E_\L$ is decreasing in $[r_{3+4k},r_{4+4k}]$, then 
	\[
	\frac{\L}{2}(u'(r_{3+4k}))^2> G(u(r_{4+4k})),
	\]
	and we conclude by Claim 3.\\
	{\it Claim 5:  there exist $\d_2 >0$ and $c>0$ such that $u'(r)\ge c$, for all $k\in \N$ and $r\in [r_{3+4k}-\d_2 ,r_{3+4k}+\d_2]$.}\\
	The proof is similar to that of Claim 2, using Claim 4.
	\\
	{\it Claim 6:  there exist $c_1,c_2>0$ such that 
		\[
		c_1\le |u(r)|\le c_2<\alpha, \quad \hbox{ for all $k\in \N$ and }r\in \R_+\setminus \bigcup_{k=0}^{+\infty} [r_{1+2k}-\d ,r_{1+2k}+\d]\]
where $\d :=\min\{\d _1,\d_2\}$.}
	\\
	We need to prove only the control from below of $|u|$ which is a trivial consequence of Claim 2 and Claim 5.\\
	%\[
	%u(r_1-\d)
	%=u(r_1-\d) - u(r_1)
	%=-\int_{r_1 -\d }^{r_1} u'(s)ds \geq c\d .
	%\]
	{\it Claim 7:  there exists $R>0$ such that $r_{5+4k}-r_{1+4k}\le R$, for all $k\in \N$.}
	\\
	Suppose by contradiction that there exists a diverging sequence $\{k_n\}_n$ such that
	\[
	r_{5+4k_n}-r_{1+4k_n}\xrightarrow[n\to +\infty]{}+\infty,
	\]
	hence there exists $i=1,\ldots,4$ such that, up to a subsequence, still denoted by $k_n$,
	\[
	r_{i+1+4k_n}-r_{i+4k_n}\xrightarrow[n\to +\infty]{}+\infty.
	\]
	Being  the arguments similar, we can suppose that 
	\[
	r_{5+4k_n}-r_{4+4k_n}\xrightarrow[n\to +\infty]{}+\infty.
	\]
	Since $u$ solves \eqref{eqr}, by Lemma \ref{lemma0} and Claim 6, we infer that there exists $c>0$ such that,  for $n$ large enough,
	\[
	u''(r)\le -c,\qquad
	\text{ for }r\in [r_{4+4k_n},r_{5+4k_n}-\d].
	\]
	Hence we have that
	\[
	u'(r_{5+4k_n}-\d)
	%=u'(r_{5+4k_n}-\d)-u'(r_{4+4k_n})
	=\int_{r_{4+4k_n}}^{r_{5+4k_n}-\d}u''(r)\, dr
	\le -c (r_{5+4k_n}-\d-r_{4+4k_n})\xrightarrow[n\to +\infty]{}-\infty
	\]
	which is contradiction with Lemma \ref{lemma0}.
	\\
	We now reach a contradiction, concluding the proof, if we prove the final step.
	\\
	{\it Claim 8:  $\inf_{\I} E_\L=-\infty.$}
	\\
	Since $u$, in $\I$, solves 
	\[
	\L u''(r)+\l \frac{N-1}{r}u'(r)+g(u(r))=0,
	\] 
	then, for any $r\in \I$, we have
	\[
	E'_\L(r)=-\l \frac{N-1}{r}(u'(r))^2.
	\]
	Hence, by Claim 2,
	\begin{equation}\label{dr1}
	\begin{split}
	E_\L(r_1+\d)
	&=E_\L(r_1)
	+\int_{r_1}^{r_1+\d} E'_\L(r)\, dr
	\\
	&=E_\L(r_1)
	-\int_{r_1}^{r_1+\d} \l \frac{N-1}{r}(u'(r))^2\, dr
	\\
	&\le E_\L(r_1)
	-c\log\left(1+\frac{\d}{r_1}\right)
	\\
	&\le E_\L(r_1)-c\frac{\d}{r_1}, 
	\end{split}
	\end{equation}
	where we have used the fact that there exists a fixed positive constant $c$ such that for any $s\in (0,\d /r_1]$, we have $\log(1+s)\ge cs$.
	\\
	Analogously, by \eqref{dr1} and Lemma \ref{lemma1}, we deduce that
	\[
	E_\L(r_5+\d)
	\le E_\L(r_5)
	-c\log\left(1+\frac{\d}{r_5}\right)
	\le E_\L(r_1)-c\d\left(\frac{1}{r_1}+\frac{1}{r_5}\right).
	\]
	Repeating this argument and since by Claim 7 we have that $r_{1+4k}\le r_1+kR$, for any $k\in \N$, we infer that
	\[
	E_\L(r_{1+4k}+\d)
	\le E_\L(r_1)-c\d\sum_{j=0}^{k}\frac{1}{r_1+jR},
	\]
	which implies that $\inf_{\I} E_\L=-\infty$, as desired.
\end{proof}

\begin{remark}
While in one-dimensional case we have an entire description of the situation for any $0<\xi<\a$, in the multi-dimensional one the partial results on the monotonicity of $E_\l$ and $E_\L$ seem to be not enough to understand completely what happens whenever $\L G(\xi)>\l G(\a)$.
\end{remark}

\subsection{Proof of (\ref{n>1+iv}) of Theorem \ref{n>1+}}

\
\\
		Arguing again as in \eqref{behav0} we infer that 
		$N\L u''(0)=-g(\xi)>0$
		and $u$ is increasing and convex near zero and, also in this case, we divide the proof into intermediate steps.\\
		{\it Step 1: $u'>0$ until $u$ attains a first zero in a certain $r_1>0$}.\\
		The first zero $r_1$ can be obtained arguing as in the proof of Lemma \ref{le:step1}.
		\\
		{\it Step 2: $u'>0$ until $u$ attains a critical point (which is also a local maximizer) at some $r_2 > r_1$ with $0 < u(r_2 ) < -\xi$}.
		\\
		Concerning the
		behaviour of $u$ on $[r_1 ,+\infty)$, there are the following three possibilities:
		\begin{enumerate}[label=(\alph{*}), ref=\alph{*}]
			\item \label{Nminusa}$u'>0$ until $u$ attains $-\xi$  at some $\bar r>r_1$;
			\item \label{Nminusb}$u'>0$ on $[r_1 , +\infty)$ and $u$ increases  to some value $u_\infty \in  (0,-\xi]$;
			\item \label{Nminusc}$u'>0$ until $u$  attains a critical point at some $r_2 > r_1$ with $0 < u(r_2 ) < -\xi$.
		\end{enumerate}
		Let us show that the cases (\ref{Nminusa}) and (\ref{Nminusb}) do not occur.\\
		Indeed, if there existed $\bar r > r_1$ such that $u(\bar r) = -\xi$, then we would deduce that
		\[
		E_\lambda(\bar r) \ge G(u(\bar r)) = G(\xi)=G(u(0)) = E_\lambda(0),
		\]
		which is in contradiction with the decreasing monotonicity of $E_\lambda$ in the  interval $[0,\bar r]$ (see Lemma \ref{le:Lemma21+}).
		Hence, the case (\ref{Nminusa}) is impossible. 
		\\
		Let us now suppose that (\ref{Nminusb}) holds.\\
		First of all, let us observe that, by Lemma \ref{le:Lemma21+} $E_\lambda$ is decreasing in $[r_1,+\infty)$.
		Therefore there exists $E_{\lambda,\infty}:=\lim_{r\to +\infty}E_\lambda(r)$.
%		{\color{red}\\
%			We show that, in this case, $\lim_{r\to +\infty}u'(r)=0$.\\
%			Indeed, otherwise, since $u$ admits a  horizontal asymptote, there would exist $c>0$ and two positive increasing diverging sequences $\{s_n\}_n$ and $\{t_n\}_n$, such that $s_n\in (t_n,t_{n+1})$, $u'(s_n)\ge c$, for $n$ sufficiently large, and $u'(t_n)\to 0$, as $n\to +\infty$.
%			Therefore we would have $$E_\lambda(s_n)\ge \frac{\lambda}{2}c^2+G(u_\infty),$$ for $n$ sufficiently large, while $$E_\lambda(t_n)\to G(u_\infty)$$ as $n\to +\infty$, contradicting the existence of the limit of $E_\lambda$ at infinity.
%			\\
%			Let us prove, now, that for all $n\ge 1$, there exists $s_n>n$ such that $-1/n< u''(s_n)\le0$. Suppose by contradiction that there exist $c>0$ and $\bar r>0$ such that either $u''(r)>0$ or $u''(r)\le -c$, for any $r>\bar r$. The first case is impossible, since it would contradict the fact that $u'> 0$ and $u'$ goes to zero at infinity.
%			%\todo[inline]{Il primo caso \`e in contraddizione anche con la presenza dell'asintoto orizzontale, ma per me va bene anche cos\`i.}
%			Also the second case is impossible, because it would imply that $\lim_{r\to +\infty}u'(r)=-\infty$. }
		\\
		Arguing as in the case (\ref{n>1+iii}), we can prove that $\lim_{r\to +\infty}u'(r)=0$ and
		that there exists a diverging sequence $\{s_n\}_n$ such that $u'(s_n)>0$ and $-1/n< u''(s_n)\le0$. Therefore we have
		\[
		g(u_\infty)
		=\lim_{n\to +\infty}g(u(s_n))
		=-\lim_{n\to +\infty}\left(\lambda u''(s_n)+\frac{(N-1)}{s_n}\Lambda u'(s_n)\right)=0
		\] 
		and so we get a contradiction being $u_\infty \in  (0,-\xi]$. 
%{\color{red}Indeed,
%			hypothesis \eqref{g3} implies that $g(s) = 0$ if and only if $|s| \in \{0,\a\}$.}
		\\
		Therefore, we can say that $u'>0$ until $u$  attains a critical point at some $r_2 > r_1$ with $0 < u(r_2 ) <- \xi$. 
		Finally, by \eqref{eqr}, \eqref{g2} and \eqref{g3}, we have
		\[
		\theta u''(r_2)=-g(u(r_2))<0.
		\]
		Hence, $r_2$ is a local maximizer. 
		\\
		{\it Step 3: $u$ oscillates and  $\|u\|_{L^\infty(\R_+)}=-\xi$}.
		\\
		Let us first show that there exists an increasing sequence $\{r_n\}_n$ such that $r_0=0$, each $r_{4k}$ is local minimizer, each $r_{2+4k}$ is local maximizer, and each $r_{1+2k}$ is zero of $u$,  with $k\in\N$. 
		\\
		Arguing as in the proof of Lemma \ref{le:step3},  we can infer that $u'<0$  in a right neighbourhood of $r_2$, until $u$ remains positive, and we get a second zero $r_3 > r_2$.\\
		Concerning the behaviour of $u$ on $[r_3 ,+\infty)$, there are again three possibilities:
		\begin{enumerate}[label=(\alph{*}'), ref=\alph{*}']
			\item \label{Nminusap} $u'<0$ until $u$  attains $\xi$  at some $\bar r>r_3$;
			\item \label{Nminusbp} $u'<0$ on $[r_3 , +\infty)$ and $u$ decreases to some value $u_\infty \in [\xi,0)$;
			\item \label{Nminuscp} $u'<0$ until $u$ attains a critical point (which is a local minimizer) at some $r_4 > r_3$ with $\xi < u(r_4 ) < 0$.
		\end{enumerate}
		Let us prove that the case (\ref{Nminusap}) cannot occur.\\
		The arguments cannot be the same of Step 2, because the maps $E_\lambda$ and $E_\Lambda$ are not always decreasing whenever $u'<0$.
		\\
		We start observing that, by Lemma \ref{le:Lemma21+}, $E_\Lambda$ is decreasing in $[0,r_1]$ and so $E_\Lambda (0)>E_\Lambda(r_1)$, which implies that
		\begin{equation}\label{a'1}
		G(\xi)>\frac \Lambda 2 (u'(r_1))^2.
		\end{equation}
		Since, again by Lemma \ref{le:Lemma21+}, $E_\lambda$ is decreasing in $[r_1,r_3]$, 
		%{\color{red}(because $u'<0$ in $[r_1,r_2)$ and $u'>0$ and $g(u)<0$ in $(r_2,r_3)$)}, 
		we have that $E_\lambda(r_1)>E_\lambda(r_3)$, namely
		\begin{equation}\label{a'2}
		(u'(r_1))^2>(u'(r_3))^2.
		\end{equation}  
		If (\ref{Nminusap}) held, then, being $u'<0$ and $g(u)<0$ in $(r_3,\bar r]$, by Lemma \ref{le:Lemma21+}, we would have that $E_\Lambda$ is decreasing in $[r_3, \bar r]$ and so $E_\Lambda (r_3)>E_\Lambda(\bar r)$, namely
		\begin{equation}\label{a'3}
		\frac \Lambda 2 (u'(r_3))^2>
		%{\color{red}\frac \Lambda 2 (u'(\bar r))^2 +G(u(\bar r))=}
		\frac \Lambda 2 (u'(\bar r))^2 +G(\xi).
		\end{equation} 
		Therefore, putting together \eqref{a'1}, \eqref{a'2} and \eqref{a'3}, we have
		\[
		G(\xi)>\frac \Lambda 2(u'(r_1))^2
		>\frac \Lambda 2(u'(r_3))^2
		>\frac \Lambda 2 (u'(\bar r))^2 +G(\xi)
\ge G(\xi),
		\]
		and we reach a contradiction.
		\\
		Finally we can prove that the case (\ref{Nminusbp}) does not occur with similar arguments to those of  (\ref{Nminusb}) of  Step 2 using the fact that $E_\Lambda$ would be decreasing in $[r_3, +\infty)$.
		\\
		Similarly to the case (\ref{n>1+iii}), we can now repeat the arguments proving the existence of
		a zero $r_5>r_4$, a local maximizer $r_6>r_5$ and so on, such that the sequence of the local minima $\{u(r_{4k})\}_k $ is increasing, the sequence of local maxima $\{u(r_{2+4k})\}_k $ is decreasing. Moreover we have also that $u(r_{2+4k})<-u(r_{4k})$, for any $k\in \N$.
%		\todo[inline]{Qui potremmo dire qualcosa in pi\`u. Ci serve infatti sapere i legami tra massimi e minimi. Infatti abbiamo che, poich\'e $E_\lambda$ e $E_\Lambda$ sono decrescenti in $[r_{4k},r_{2+4k}]$, allora $G(u(r_{2+4k}))<G(u(r_{4k}))$ e quindi $u(r_{2+4k})<-u(r_{4k})$.}
%		{\color{red}Moreover, again, by Lemma \ref{le:Lemma21+}, $E_\lambda$ is decreasing in $[r_2,r_3]$ and so $E_\lambda (r_2)>E_\lambda(r_3)$, which implies that
%			\begin{equation}\label{a'4}
%			G(u(r_2))>\frac \lambda 2 (u'(r_3))^2.
%			\end{equation} 
%			Since, by Lemma \ref{le:Lemma21+}, $E_\Lambda$ is decreasing in $[r_3,r_5]$,
%			%{\color{red}(because $u'>0$ and $g(u)>0$ in $(r_3,r_4)$ and $u'<0$ in $(r_4,r_5]$)}, 
%			we have that $E_\Lambda(r_3)>E_\Lambda(r_5)$, namely
%			\begin{equation}\label{a'5}
%			(u'(r_3))^2>(u'(r_5))^2.
%			\end{equation} 
%			Finally, by Lemma \ref{le:Lemma21+}, we have that $E_\lambda$ is decreasing in $[r_5, r_6]$ and so $E_\lambda (r_5)>E_\lambda(r_6)$, namely
%			\begin{equation}\label{a'6}
%			\frac \lambda 2 (u'(r_5))^2>G(u(r_6)).
%			\end{equation} 
%			Therefore, putting together \eqref{a'4}, \eqref{a'5} and \eqref{a'6}, we have
%			\[
%			G(u(r_2))
%			%	{\color{red}>\frac \lambda 2(u'(r_3))^2	>\frac \lambda 2(u'(r_5))^2}
%			>G(u(r_6)).
%			\]
%			Hence, by \eqref{g2} and \eqref{g3}, we conclude that $u(r_2)<u(r_6)$. Repeating these arguments, we have also that  $u(r_2)<u(r_6)<u(r_{10})<\cdots$.}
		Notice, at last, that this reasoning also shows that there are no further zeros or critical points and that $\|u\|_{L^\infty(\R_+)}=-\xi$.
%	\todo[inline]{Dovremmo avere anche che $\Lambda G(u(r_2))>\lambda G(u(r_4))$ (se serve). Infatti, $E_\lambda$ \`e decrescente in $(r_2,r_3)$ e quindi
%		\[
%		G(u(r_2))>\frac{\lambda}{2}(u'(r_3))^2.
%		\]
%		In $(r_3,r_4)$, $E_\Lambda$ \`e decrescente, quindi
%		\[
%		\frac{\Lambda}{2}(u'(r_3))^2>G(u(r_4)).
%		\]
%	}
		\\
		{\it Step 4: $u$ is localized}.
\\
 We can prove that $u$ is localized repeating the arguments of the case \eqref{n>1+iii} defining
\begin{equation*}
\I':=\bigcup_{k=0}^{+\infty}[r_{3+4k},r_{4+4k}],
\end{equation*}
and observing that $E_\L$ is decreasing in $\I'$.

\subsection*{Acknowledgment}

The authors wish to express their more sincere gratitude to the anonymous referee: his/her acute comments and suggestions have been crucial to improve the quality and the clarity of paper.

\end{document}